\def\Vvu{\stackrel{\wh{\cV}^u}{\longleftrightarrow} \hspace{-2.8ex} \mbox{\f /}\;\;\;}
\def\Vu{\stackrel{\cV^u}{\longleftrightarrow} \hspace{-2.8ex} \mbox{\f /}\;\;\;}
\def\vv{\stackrel{v}{\longleftrightarrow} \hspace{-2.8ex} \mbox{\f /}\;\;\;}
\def\u{\stackrel{u}{\longleftrightarrow} \hspace{-2.8ex} \mbox{\scriptsize /}\;\;\;}
\def\uv{\stackrel{u + \varepsilon}{\longleftrightarrow} \hspace{-2.8ex} \mbox{\f /}\;\;\;}
\def\uu{\stackrel{u}{\longleftrightarrow}}
\def\uuu{\stackrel{u'}{\longleftrightarrow} \hspace{-2.8ex} \mbox{\f /}\;\;\;}
\def\lla{\longleftrightarrow}
\def\IP{{\mathbb P}}
\def\IR{{\mathbb R}}
\def\IZ{{\mathbb Z}}
\def\n{\noindent}
\def\dis{\displaystyle}
\def\ov{\overline}
\def\ve{\varepsilon}
\def\f{\footnotesize}
\def\r{\rightarrow}
\def\point{{\mbox{\large $.$}}}
\def\wh{\widehat}
\def\wt{\widetilde}
\def\cC{{\cal C}}
\def\cD{{\cal D}}
\def\cI{{\cal I}}
\def\cV{{\cal V}}
\begin{document}

\title*{On the $C^1$-property of the percolation function of random interlacements and a related variational problem}
\author{Alain-Sol Sznitman}
\titlerunning{On the $C^1$-property of the percolation function}
\institute{Alain-Sol Sznitman \at Department of Mathematics, ETH Z\"urich, R\"amistrasse 101, 8092 Z\"urich, Switzerland,\\
 \email{sznitman@math.ethz.ch}
}
%
%

\maketitle

\vspace{-2cm}
\begin{center}
{\it In memory of Vladas Sidoravicius}
\end{center}

\bigskip
\abstract{We consider random interlacements on $\IZ^d$, $d \ge 3$. We show that the percolation function that to each $u \ge 0$  attaches the probability that the origin does not belong to an infinite cluster of the vacant set at level $u$, is $C^1$ on an interval $[0,\wh{u})$, where $\wh{u}$ is positive and plausibly coincides with the critical level $u_*$ for the percolation of the vacant set. We apply this finding to a constrained minimization problem that conjecturally expresses the exponential rate of decay of the probability that a large box contains an excessive proportion $\nu$ of sites that do not belong to an infinite cluster of the vacant set. When $u$ is smaller than $\wh{u}$, we describe a regime of ``small excess'' for $\nu$ where all minimizers of the constrained minimization problem remain strictly below the natural threshold value $\sqrt{u}_* - \sqrt{u}$ for the variational problem.
}

\medskip\noindent
MSC(2010): 60K35, 35A15, 82B43 

\medskip\noindent
Keywords: random interlacements, percolation function, variational problem

\setcounter{section}{-1}
\section{Introduction}

In this work we consider random interlacements on $\IZ^d$, $d \ge 3$, and the percolation of the vacant set of random interlacements. We show that the percolation function $\theta_0$ that to each level $u \ge 0$ attaches the probability that the origin does not belong to an {\it infinite} cluster of $\cV^u$, the vacant set at level $u$ of the random interlacements, is $C^1$ on an interval $[0,\wh{u})$, where $\wh{u}$ is positive and plausibly coincides with the critical level $u_*$ for the percolation of $\cV^u$, although this equality is presently open. We apply this finding to a constrained minimization problem that for $0<u<u_*$ conjecturally expresses the exponential rate of decay of the probability that a large box contains an excessive proportion $\nu$ bigger than $\theta_0(u)$ of sites that do not belong to the infinite cluster of $\cV^u$. When $u > 0$ is smaller than $\wh{u}$ and $\nu$ close enough to $\theta_0(u)$, we show that all minimizers $\varphi$ of the constrained minimization problem are $C^{1,\alpha}$-functions on $\IR^d$, for all $0 < \alpha < 1$, and their supremum norm  lies strictly below $\sqrt{u}_* - \sqrt{u}$. In particular, the corresponding ``local level'' functions $(\sqrt{u} + \varphi)^2$ do not reach the critical value $u_*$.

We now discuss our results in more details. We consider random interlacements on $\IZ^d$, $d \ge 3$, and refer to \cite{CernTeix12} or \cite{DrewRathSapo14c} for background material. For $u \ge 0$, we let $\cI^u$ stand for the random interlacements at level $u$ and $\cV^u = \IZ^d \backslash \cI^u$ for the vacant set at level $u$. A key object of interest is the percolation function 
\begin{equation}\label{0.1}
\theta_0(u) = \IP [0 \u  \;\infty], \;\mbox{for $u \ge 0$},
\end{equation}
where $\{0 \u \; \infty\}$ is a shorthand for the event $\{0 \Vu \; \infty\}$ stating that $0$ does not belong to an infinite cluster of $\cV^u$. One knows from \cite{Szni10} and \cite{SidoSzni09} that there is a critical value $u_* \in (0, \infty)$ such that $\theta_0$ equals $1$ on $(u_{*,\infty})$ and is smaller than $1$ on $(0,u_*)$. And from Corollary 1.2 of \cite{Teix09a}, one knows that the non-decreasing left-continuous function $\theta_0$ is continuous except maybe at the critical value $u_*$.

\begin{figure}[h]
\begin{center}
\psfrag{0}{$0$}
\psfrag{u}{$u$}
\psfrag{u*}{$u_*$}
\psfrag{1}{$1$}
\psfrag{t0}{$\theta_0$}
\includegraphics[scale=.45]{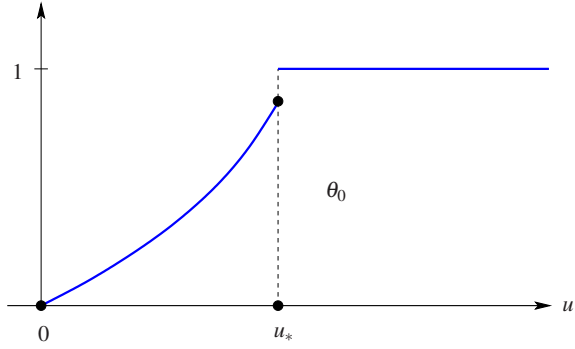}
\\
\caption{A heuristic sketch of the graph of $\theta_0$ (with a possible but not expected jump at $u_*$)}
\label{fig:1}       
\end{center}
\end{figure}

With an eye towards applications to a variational problem that we discuss below, see (\ref{0.9}), we are interested in proving that $\theta_0$ is $C^1$ on some (hopefully large) neighborhood of $0$. With this goal in mind, we introduce the following definition. Given $0 \le \alpha < \beta < u_*$, we say that NLF$ (\alpha, \beta)$, the no large finite cluster property on $[\alpha,\beta]$, holds when
\begin{equation}\label{0.2}
\begin{array}{l}
\mbox{there exists $L_0(\alpha,\beta) \ge 1$, $c_0 (\alpha,\beta) > 0$, $\gamma(\alpha, \beta) \in (0,1]$ such that}
\\
\mbox{for all $L \ge L_0$ and $u \in [\alpha,\beta]$, $\IP [0 \stackrel{u}{\longleftrightarrow} \partial B_L$, $0 \u \; \infty] \le e^{-c_0 L^\gamma}$},
\end{array}
\end{equation}
where $B_L = B(0,L)$ is the closed ball for the sup-norm with center $0$ and radius $L$, $\partial B_L$ its internal boundary (i.e. the subset of sites in $B_L$  that are neighbors of $\IZ^d \backslash B_L$), and the notation is otherwise similar to (\ref{0.1}). We then set
\begin{equation}\label{0.3}
\wh{u} = \sup\{u \in [0,u_*) \,; \; {\rm NLF}(0,u) \;\mbox{holds}\}.
\end{equation}
One knows from Corollary 1.2 of  \cite{DrewRathSapo14a} that $\wh{u}$ is positive:
\begin{equation}\label{0.4}
\wh{u} \in (0,u_*]\,.
\end{equation}

It is open, but plausible, that $\wh{u} = u_*$ (see also \cite{DumGosRodrSev} for related progress in the context of level-set percolation of the Gaussian free field). Our first main result is:
\begin{theorem}\label{theo0.1}
\begin{eqnarray}
&&\mbox{The function $\theta_0$ is $C^1$ on $[0,\wh{u})$ and}\label{0.5}
\\
&&\mbox{$\theta'_0$ is positive on $[0,\wh{u})$}.\label{0.6}
\end{eqnarray}
\end{theorem}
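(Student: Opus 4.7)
The plan is to approximate $\theta_0$ on $[0,\wh{u})$ by the real-analytic finite-volume proxies $1-\psi_L$, with $\psi_L(u) = \IP[0 \uu \partial B_L]$, derive a Russo-type formula for $\psi'_L$, and upgrade pointwise convergence to $C^1$-convergence using the NLF hypothesis. Since any shortest path from $0$ to $\partial B_L$ stays inside $B_L$, the event $\{0 \uu \partial B_L\}$ is determined by $\cV^u \cap B_L$, equivalently by the Poisson$(u\,{\rm cap}(B_L))$ collection of trajectories of the underlying point process that enter $B_L$; hence $\psi_L$ is real analytic in $u$. The inclusion $\{0 \uu \infty\} \subset \{0 \uu \partial B_L\}$ and (\ref{0.2}) then yield $0 \le \psi_L(u) - (1 - \theta_0(u)) \le e^{-c_0 L^\gamma}$ uniformly in $u \in [0, \beta]$ for any $\beta < \wh{u}$, so $1 - \psi_L \to \theta_0$ uniformly on compact subsets of $[0, \wh{u})$.

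Let $\nu$ be the intensity measure on trajectories modulo time-shift underlying the interlacement point process, so that $\omega^u$ has intensity $u\nu$. A direct Poisson computation (or Mecke's refined Campbell formula) gives the Russo-type identity $-\psi'_L(u) = \int \rho_L(u, w)\,\nu(dw)$, where $\rho_L(u, w)$ is the probability that $0$ is connected to $\partial B_L$ in $\cV^u$ but not in $\cV^u \setminus \mathrm{Range}(w)$; note that $\rho_L(u, w)$ vanishes unless $w$ meets $B_L$. The natural candidate for the infinite-volume derivative is $g(u) := \int \rho(u, w)\,\nu(dw)$, with $\rho(u, w)$ the probability that $0$ is connected to $\infty$ in $\cV^u$ but not in $\cV^u \setminus \mathrm{Range}(w)$. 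Establishing $-\psi'_L \to g$ uniformly on $[0, \beta]$ then combines with the uniform convergence of $\psi_L$ to produce $\theta_0 \in C^1([0, \wh{u}))$ with $\theta'_0 = g$, hence (\ref{0.5}).

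A case analysis shows the symmetric difference of the events defining $\rho_L$ and $\rho$ splits into two contributions. The first is contained in $\{0 \uu \partial B_L,\, 0 \u \infty\}$, for which NLF gives probability $\le e^{-c_0 L^\gamma}$; integrated against $\nu$ restricted to $w$ hitting $B_L$, this contributes at most ${\rm cap}(B_L)\,e^{-c_0 L^\gamma} \lesssim L^{d-2}e^{-c_0 L^\gamma}$ and vanishes uniformly. The second, and main obstacle, is the perturbed event $\{0 \mbox{ lies in a finite cluster of diameter} \ge L \mbox{ in } \cV^u \setminus \mathrm{Range}(w)\}$, to which NLF does not apply directly since it concerns the perturbed vacant set. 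I propose to use Mecke's formula to rewrite its $\nu$-integral as $\frac{1}{u}\IE[\mathbf{1}_{A_L}\, N]$, where $A_L = \{0 \mbox{ in a finite cluster of } \cV^u \mbox{ of diameter} \ge L\}$ and $N$ counts trajectories of $\omega^u$ whose removal would reconnect $0$ to infinity. The crude bound $N \le \#\{w' \in \omega^u : w' \mbox{ hits } B_{\mathrm{diam}(C_0^u)+1}\}$, a dyadic decomposition $\mathrm{diam}(C_0^u) \in [2^k L,\, 2^{k+1}L)$, the NLF estimate $\IP[\mathrm{diam}(C_0^u) \ge 2^k L,\, C_0^u \mbox{ finite}] \le e^{-c_0 (2^k L)^\gamma}$, Cauchy-Schwarz, and the Poisson moment bound ${\rm cap}(B_{2^{k+1} L}) \lesssim (2^k L)^{d-2}$ then produce a geometric sum decaying exponentially in $L$, uniformly on any $[u_0, \beta] \subset [0, \wh{u})$ with $u_0 > 0$; uniformity down to $u = 0$ is immediate since both $\mathbf{1}_{A_L}$ and $N$ vanish there.

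For positivity (\ref{0.6}), lower-bound $g(u) = \theta'_0(u)$ by restricting the integral defining $g$ to trajectories whose range contains the origin: for such $w$, $0 \notin \cV^u \setminus \mathrm{Range}(w)$, so $0$ is (by the standard convention) disconnected from $\infty$ post-$w$, and $\rho(u, w) = \IP[0 \uu \infty] = 1 - \theta_0(u)$. Since the $\nu$-mass of the set of trajectories hitting $\{0\}$ equals ${\rm cap}(\{0\}) > 0$ and $\theta_0(u) < 1$ on $[0, u_*) \supset [0, \wh{u})$, we conclude $\theta'_0(u) \ge {\rm cap}(\{0\})(1 - \theta_0(u)) > 0$ throughout $[0, \wh{u})$.
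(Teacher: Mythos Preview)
Your approach is genuinely different from the paper's and largely sound, but there is a gap in the extension to $u=0$.

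\medskip
\textbf{Comparison with the paper.} The paper never differentiates $\psi_L$ nor writes down a Russo formula. Instead it works directly with difference quotients of $\theta_0$: for $u<u'\le u''$ it compares $\Delta'=\frac{\theta_0(u')-\theta_0(u)}{u'-u}$ and $\Delta''=\frac{\theta_0(u'')-\theta_0(u)}{u''-u}$ via finite-volume approximants $\wt\Delta',\wt\Delta''$ (controlled by NLF) and a Poisson identity relating the ``one extra trajectory'' picture to the event $\{N_{u,u''}(B_{L'})=1\}$. A dyadic iteration in the level increment then shows the difference quotients are uniformly Cauchy. The small parameter throughout is $u''-u$, never $u$ itself, so the argument works down to $u=0$ without any special care. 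Your route through $\psi_L'$ and Mecke is conceptually more direct and yields an explicit integral formula for $\theta_0'$, which is a nice byproduct; the paper's route is more roundabout but avoids the difficulty below. Your positivity argument is essentially the paper's Lemma~1.1 (both give $\theta_0'(u)\ge {\rm cap}(\{0\})(1-\theta_0(u))$).

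\medskip
\textbf{The gap.} Your Cauchy--Schwarz bound on the second contribution gives, after the dyadic sum,
\[
\frac{1}{u}\,\IE[\mathbf 1_{A_L}N]\ \le\ \frac{1}{u}\sum_{k\ge 0} e^{-c(2^kL)^\gamma/2}\,\big(u\,{\rm cap}(B_{2^{k+1}L})+u^2\,{\rm cap}(B_{2^{k+1}L})^2\big)^{1/2},
\]
and for small $u$ each summand behaves like $u^{-1/2}$ times something independent of $u$. So the bound is uniform on $[u_0,\beta]$ for any $u_0>0$, but \emph{not} on $[0,\beta]$. The sentence ``uniformity down to $u=0$ is immediate since both $\mathbf 1_{A_L}$ and $N$ vanish there'' does not repair this: vanishing of the numerator and of $u$ at the single point $u=0$ says nothing about uniformity of the ratio on $(0,\beta]$. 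Concretely, at $u=0$ the second contribution (in its integral form, before Mecke) equals $\nu(\{w:\ 0\ \text{lies in a finite component of }\IZ^d\setminus{\rm Range}(w)\text{ reaching }\partial B_L\})$, which is nonzero for each $L$ (a single bilateral walk can visit all $2d$ neighbours of a large box and surround it) and must be shown to decay in $L$ by a separate argument. To close the gap you would either need a refined estimate for $\IE[\mathbf 1_{A_L}N]$ carrying an extra factor of $u$ for small $u$, or to abandon the Mecke rewriting for the second term and instead use a Poisson comparison of $\cV^u\setminus{\rm Range}(w)$ with $\cV^{u+\varepsilon}$ (as in the paper's Lemma~1.3), which keeps the small parameter as the level increment rather than $u$. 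As written, your argument establishes $\theta_0\in C^1((0,\wh u))$ but not the claimed $C^1$-property on $[0,\wh u)$.
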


Incidentally, let us mention that in the case of Bernoulli percolation the function corresponding to $\theta_0$ is known to be $C^\infty$ in the supercritical regime, see Theorem 8.92 of \cite{Grim99}. However, questions pertaining to the sign of the second derivative (in particular the possible convexity of the corresponding function in the supercritical regime) are presently open. Needless to say that in our case the shape of the function $\theta_0$ is not known (and the sketch in Figure 1 conceivably misleading).

Our interest in Theorem \ref{theo0.1} comes in conjunction with an application to a variational problem that we now describe. We consider
\begin{equation}\label{0.7}
\begin{array}{l}
\mbox{$D$ the closure of a smooth bounded domain, or of an open}
\\
\mbox{sup-norm ball, of $\IR^d$ that contains $0$}.
\end{array}
\end{equation}
Given $u$ and $\nu$ such that
\begin{equation}\label{0.8}
\mbox{$0 < u < u_*$ and $\theta_0(u) \le \nu < 1$},
\end{equation}
we introduce the constrained minimization problem
\begin{equation}\label{0.9}
I^D_{u,\nu} = \inf \Big\{\mbox{\f $\dis\frac{1}{2d}$} \;\dis\int_{\IR^d} | \nabla \varphi|^2 dz; 
\varphi \ge 0, \varphi \in C^\infty_0 (\IR^d), \;\dis\int_D \hspace{-2.5ex}{-} \;
\theta_0\big((\sqrt{u} + \varphi)^2\big) \,dz > \nu\Big\},
\end{equation}
where $C^\infty_0(\IR^d)$ stands for the set of smooth compactly supported functions on $\IR^d$ and $\int_D \hspace{-2.5ex}{-} \; \dots dz$ for the normalized integral $\frac{1}{|D|} \int \dots dz$ with $|D|$ the Lebesgue measure of $D$ (see also below (\ref{0.10}) for the interpretation of $\varphi$).

The motivation for the variational problem (\ref{0.9}) lies in the fact that it conjecturally describes the large deviation cost of having a fraction at least $\nu$ of sites in the large discrete blow-up $D_N = (ND) \cap \IZ^d$ of $D$ that are not in the infinite cluster $\cC^u_\infty$ of $\cV^u$. One knows by the arguments of Remark 6.6~2) of \cite{Szni19} that
\begin{equation}\label{0.10}
\liminf\limits_N \; \mbox{\f $\dis\frac{1}{N^{d-2}}$}\; \log \IP[ | \,D_N \backslash \cC^u_\infty| \ge \nu \, |D_N|] \ge - I^D_{u,\nu} \;\; \mbox{for $u,\nu$ as in (\ref{0.8})}
\end{equation}
(with $|A|$ standing for the number of sites in $A$ for $A$ subset of $\IZ^d$).

\bigskip
It is presently open whether the lim inf can be replaced by a limit and the inequality by an equality in (\ref{0.10}), i.e. if there is a matching asymptotic upper bound. If such is the case, there is a direct interest in the introduction of a notion of minimizers for (\ref{0.9}). Indeed, $(\sqrt{u} + \varphi)^2 (\frac{\cdot}{N})$ can be interpreted as the slowly varying local levels of the tilted interlacements that enter the derivation of the lower bound (\ref{0.10}) (see Section 4 and Remark 6.6~2) of \cite{Szni19}). In this perspective, it is a relevant question whether minimizers $\varphi$ reach the value $\sqrt{u}_* - \sqrt{u}$. The regions where they reach the value $\sqrt{u_*} - \sqrt{u}$ could potentially reflect the presence of droplets secluded from the infinite cluster $\cC^u_\infty$ and taking a share of the burden of creating an excess fraction $\nu$ of sites of $D_N$ that are not in $\cC^u_\infty$ (see also the discussion at the end of Section 2).

\bigskip
The desired notion of minimizers for (\ref{0.9}) comes in Theorem \ref{theo0.2} below. For this purpose we introduce the right-continuous modification $\ov{\theta}_0$ of $\theta_0$:
\begin{equation}\label{0.11}
\ov{\theta}_0(u) = \left\{ \begin{array}{ll}
\theta_0(u), &\mbox{when $0 \le u < u_*$},
\\[1ex]
1, & \mbox{when $u \ge u_*$}.
\end{array}\right. 
\end{equation}
Clearly, $\ov{\theta}_0 \ge \theta_0$ and it is plausible, but presently open, that $\ov{\theta}_0 = \theta_0$. We recall that $D^1(\IR^d)$ stands for the space of locally integrable functions $f$ on $\IR^d$ with finite Dirichlet energy that decay at infinity, i.e.~such that $\{|f| > a\}$ has finite Lebesgue measure for all $a > 0$, see Chapter 8 of \cite{LiebLoss01}, and define for $D,u,\nu$ as in (\ref{0.7}), (\ref{0.8})
\begin{equation}\label{0.12}
\ov{J}\,^{\!D}_{u,\nu} = \inf\Big\{ \mbox{\f $\dis\frac{1}{2d}$} \; \dis\int_{\IR^d} | \nabla \varphi|^2\, dz; \varphi \ge 0, \varphi \in D^1 (\IR^d), \; \dis\int_D \hspace{-2.5ex}{-} \;
\ov{\theta}_0 \big((\sqrt{u} + \varphi)^2\big)\,dz \ge \nu\Big\}.
\end{equation}
Since $\ov{\theta}_0 \ge \theta_0$ and $D^1(\IR^d) \supseteq C^\infty_0(\IR^d)$, we clearly have $\ov{J}\,^{\!D}_{u,\nu} \le I^D_{u,\nu}$. But in fact:
\begin{theorem}\label{theo0.2}
For $D,u, \nu$ as in (\ref{0.7}), (\ref{0.8}), one has
\begin{equation}\label{0.13}
\ov{J}\,^{\!D}_{u,\nu} = I^D_{u,\nu}.
\end{equation}
In addition, the infimum in (\ref{0.12}) is attained:
\begin{equation}\label{0.14}
\begin{array}{l}
\ov{J}\,^{\!D}_{u,\nu} =  \min \Big\{ \mbox{\f $\dis\frac{1}{2d}$} \; \dis\int_{\IR^d} | \nabla \varphi|^2 \, dz; \varphi \ge 0, 
\\[1ex]
\hspace{4cm} \varphi \in D^1(\IR^d), \;\dis\int_D \hspace{-2.5ex}{-} \;
\ov{\theta}_0 \big((\sqrt{u} + \varphi)^2\big)\,dz \ge \nu\Big\}.
\end{array}
\end{equation}
and any minimizer $\varphi$ in (\ref{0.14}) satisfies
\begin{equation}\label{0.15}
\begin{array}{l}
\mbox{$0 \le \varphi \le \sqrt{u}_* - \sqrt{u}$ a.e.,}
\\[1ex]
\mbox{$\varphi$ is harmonic outside $D$, and ${\rm ess}\sup\limits_{\hspace{-3ex}z \in \IR^d} |z|^{d-2} \varphi(z) < \infty$}.
\end{array}
\end{equation}
\end{theorem}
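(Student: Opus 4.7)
The plan combines the direct method for the existence statement in (\ref{0.14}), elementary minimization and potential-theoretic arguments for the structural properties in (\ref{0.15}), and a perturbation-approximation construction for the equality in (\ref{0.13}). For the existence of a minimizer, I take a minimizing sequence $(\varphi_n) \subset D^1(\IR^d)$ with $\varphi_n \ge 0$ and $\int_D \hspace{-2.5ex}{-}\; \overline\theta_0((\sqrt u + \varphi_n)^2)\,dz \ge \nu$. Replacing each $\varphi_n$ by $\varphi_n \wedge (\sqrt{u_*} - \sqrt u)$ preserves admissibility (since $\overline\theta_0 \equiv 1$ on $[u_*,\infty)$) and only lowers the Dirichlet energy, so I may assume $0 \le \varphi_n \le \sqrt{u_*} - \sqrt u$. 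The Sobolev embedding $D^1(\IR^d) \hookrightarrow L^{2d/(d-2)}(\IR^d)$ together with Rellich's theorem produces, along a subsequence, weak $D^1$-convergence and a.e.\ convergence to some $\varphi \in D^1(\IR^d)$ with $0 \le \varphi \le \sqrt{u_*} - \sqrt u$; the energy passes to the limit by weak lower semi-continuity. For the constraint, the key point is that $g(t) := \overline\theta_0((\sqrt u + t)^2)$ is non-decreasing and right-continuous on $\IR_+$, hence upper semi-continuous, so reverse Fatou on the bounded set $D$ yields $\int_D \hspace{-2.5ex}{-}\; g(\varphi)\,dz \ge \limsup_n \int_D \hspace{-2.5ex}{-}\; g(\varphi_n)\,dz \ge \nu$, proving $\varphi$ is a minimizer.

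For (\ref{0.15}) applied to any minimizer $\varphi$, the bound $\varphi \le \sqrt{u_*} - \sqrt u$ a.e.\ follows by the same truncation: $\tilde\varphi := \varphi \wedge (\sqrt{u_*} - \sqrt u)$ is admissible with $\int |\nabla \tilde\varphi|^2 \le \int |\nabla \varphi|^2$, forcing equality and hence $\nabla \varphi = 0$ a.e.\ on $A := \{\varphi > \sqrt{u_*} - \sqrt u\}$; then $\psi := (\varphi - (\sqrt{u_*} - \sqrt u))^+ \in D^1(\IR^d)$ satisfies $\nabla \psi \equiv 0$, and the Sobolev inequality on $D^1(\IR^d)$ forces $\psi \equiv 0$, i.e.\ $|A| = 0$. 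For the harmonicity outside $D$, any variation $\varphi + t\eta$ with $\eta \in C^\infty_0(\IR^d \setminus D)$ leaves the constraint invariant, reducing the exterior problem to minimizing $\int_{D^c}|\nabla \varphi|^2$ among nonnegative functions with prescribed boundary values on $\partial D$; since those boundary values are $\ge 0$ and $\varphi \to 0$ at infinity, the maximum principle shows the harmonic extension in $\IR^d \setminus D$ is $\ge 0$, so the obstacle is inactive and $\varphi$ is genuinely harmonic outside $D$. The essential-supremum decay $|z|^{d-2} \varphi(z) \le C$ then comes from a Kelvin transform applied to the harmonic exterior part, combined with the decay $\varphi \to 0$ at infinity coming from $\varphi \in D^1(\IR^d)$.

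For (\ref{0.13}), the inequality $\overline J^{\,D}_{u,\nu} \le I^D_{u,\nu}$ is immediate since $C^\infty_0 \subset D^1$ and $\theta_0 \le \overline\theta_0$. For the reverse, I start from the minimizer $\varphi_*$ of $\overline J$ above and, for small $\varepsilon > 0$ together with large $R$ and small $\delta$, form $\varphi_\varepsilon := \chi_R \cdot (\rho_\delta * \varphi_*) + \varepsilon \xi \in C^\infty_0(\IR^d)$, where $\xi \in C^\infty_0(\IR^d)$ is nonnegative with $\xi \equiv 1$ on $D$, $\rho_\delta$ is a standard mollifier, and $\chi_R$ a smooth cut-off equal to $1$ on $B_R$. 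The decay bound from (\ref{0.15}) controls the truncation error, yielding $\int |\nabla \varphi_\varepsilon|^2 \to 2d\,\overline J^{\,D}_{u,\nu} + O(\varepsilon)$ as $R \to \infty$ and $\delta \to 0$. On $D$, the inequality $\theta_0((\sqrt u + \varphi_* + \varepsilon)^2) \ge \overline\theta_0((\sqrt u + \varphi_*)^2)$ holds pointwise: by monotonicity of $\theta_0$ on $\{\varphi_* < \sqrt{u_*} - \sqrt u\}$, and because $(\sqrt u + \varphi_* + \varepsilon)^2 > u_*$ puts $\theta_0$ at the value $1 = \overline\theta_0(u_*)$ on $\{\varphi_* = \sqrt{u_*} - \sqrt u\}$. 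The strict inequality required by $I$ is then obtained using the strict monotonicity of $\theta_0$ on $[0,\wh u)$ from Theorem~\ref{theo0.1} (producing a definite positive gain on a subset of $D$), or by absorbing the $O(\varepsilon)$ slack via continuity in $\nu$ of $\overline J$. Letting $\varepsilon \to 0$ yields $I^D_{u,\nu} \le \overline J^{\,D}_{u,\nu}$.

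The main technical obstacle is exactly this last step, namely promoting the weak constraint $\ge \nu$ in $\overline J$ to the strict constraint $> \nu$ in $I$ while keeping the Dirichlet energy close to $2d\,\overline J^{\,D}_{u,\nu}$. The subtlety lies in the potential mass defect $(1 - \theta_0(u_*))\,|\{\varphi_* = \sqrt{u_*} - \sqrt u\}|$ between $\overline\theta_0$ and $\theta_0$ on the saturation set, which is genuinely present whenever $\theta_0(u_*) < 1$. The additive $\varepsilon \xi$-bump repairs this defect by pushing the argument strictly past $u_*$ into the region where $\theta_0 = 1$, at the cost of $O(\varepsilon)$ in Dirichlet energy, and Theorem~\ref{theo0.1} supplies the strict monotonicity needed elsewhere. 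All other ingredients (the direct method, truncation at the saturation level, obstacle reasoning for the exterior problem, and multipole decay of exterior harmonic functions) are standard tools for variational problems with monotone right-continuous non-linearities.
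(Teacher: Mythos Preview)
Your overall architecture matches the paper's: direct method with reverse Fatou for (\ref{0.14}), truncation plus exterior perturbations for (\ref{0.15}), and an $\varepsilon$-bump plus smooth approximation for (\ref{0.13}). The paper proceeds in the same order and with essentially the same ingredients.

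There is, however, a genuine gap in your argument for the strict inequality $\int_D \hspace{-2.5ex}{-}\;\theta_0((\sqrt u+\varphi_*+\varepsilon)^2)\,dz>\nu$. You invoke Theorem~\ref{theo0.1}, which only gives strict monotonicity of $\theta_0$ on $[0,\wh u)$. But Theorem~\ref{theo0.2} is stated for all $u\in(0,u_*)$, and the minimizer $\varphi_*$ may well take all its values in $[\sqrt{\wh u}-\sqrt u,\sqrt{u_*}-\sqrt u)$ on $D$ (for instance whenever $u\ge\wh u$, or whenever $\nu$ is large). On that range Theorem~\ref{theo0.1} says nothing, so no ``definite positive gain'' is produced. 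Your fallback, ``absorbing the $O(\varepsilon)$ slack via continuity in $\nu$ of $\ov J$'', is at best circular: continuity in $\nu$ is established in Remark~\ref{rem2.1} only after Theorem~\ref{theo0.2}, and in any case what you would actually need is \emph{left}-continuity of $\nu\mapsto I^D_{u,\nu}$, which is not immediate from the definition (the constraint in $I$ is strict).

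The fix is exactly what the paper does: use Lemma~\ref{lem1.1}, which gives $\liminf_{\varepsilon\downarrow 0}\varepsilon^{-1}(\theta_0(v+\varepsilon)-\theta_0(v))>0$ for every $v\in[0,u_*)$, hence strict monotonicity of $\theta_0$ on the whole interval $[0,u_*)$. With this in hand, the case split is clean: if $m_D(\varphi_*<\sqrt{u_*}-\sqrt u)=0$ then the perturbed integral equals $1>\nu$; otherwise strict monotonicity on $[0,u_*)$ yields a strict gain on a set of positive measure, exactly as in the paper's display (\ref{2.6}). Everything else in your sketch goes through.
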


Thus, Theorem \ref{theo0.2} provides a notion of minimizers for (\ref{0.9}), the variational problem of interest. Its proof is given in Section 2. Additional properties of (\ref{0.14}) and the corresponding minimizers can be found in Remark \ref{rem2.1}. We refer to Chapter 11 \S3 of \cite{AmbrMalc07} for other instances of non-smooth variational problems.

\bigskip
In Section 3 we bring into play the $C^1$-property of $\theta_0$ and show

\pagebreak
\begin{theorem}\label{theo0.3}
If $u_0 \in (0, u_*)$ is such that
\begin{equation}\label{0.16}
\mbox{$\theta_0$ is $C^1$ on a neighborhood of $[0,u_0]$},
\end{equation}
then for any $u \in (0, u_0)$ there are $c_1(u,u_0, D) <  \theta_0(u_*) - \theta_0(u)$ and $c_2(u,u_0) > 0$ such that
\begin{equation}\label{0.17}
\begin{array}{l}
\mbox{for $\nu \in [\theta_0(u), \theta_0(u) + c_1]$, any minimizer $\varphi$ in (\ref{0.14}) is $C^{1,\alpha}$ for all}
\\
\mbox{$0 < \alpha < 1$, and $0 \le \varphi \le \big\{c_2 \big(\nu - \theta_0(u)\big) \big\} \wedge (\sqrt{u}_0 - \sqrt{u}) \;(< \sqrt{u}_* - \sqrt{u})$. }
\\
\mbox{Here $C^{1,\alpha}$ stands for the $C^1$-functions with $\alpha$-H\"older continuous partial}
\\
\mbox{derivatives.}
\end{array}                  
\end{equation}
\end{theorem}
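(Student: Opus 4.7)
The plan is to proceed in three steps. Write $\delta = \nu - \theta_0(u) \ge 0$ and $M = \sqrt{u}_0 - \sqrt{u} > 0$. The overall strategy is (i) an energy upper bound $I^D_{u,\nu} = \ov{J}\,^{\!D}_{u,\nu} \le C_0(u,u_0,D)\,\delta^2$; (ii) an $L^\infty$-bound $\|\varphi\|_\infty \le c_2(u,u_0)\,\delta$ for every minimizer $\varphi$ of (\ref{0.14}); (iii) an elliptic regularity argument based on the Euler--Lagrange equation. Taking $c_1$ so small that $c_2\,c_1 \le M$ and $c_1 < \theta_0(u_*) - \theta_0(u)$ will then ensure that $\|\varphi\|_\infty < M$, so $(\sqrt{u}+\varphi)^2 \le u_0$ a.e., whence $\ov{\theta}_0((\sqrt{u}+\varphi)^2) = \theta_0((\sqrt{u}+\varphi)^2)$ and $\theta_0$ is $C^1$ on the effective range of $(\sqrt{u}+\varphi)^2$ by hypothesis~(\ref{0.16}).

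For step~(i), the positivity (\ref{0.6}) together with the $C^1$-smoothness of $\theta_0$ on $[0,u_0]$ gives a constant $c' = c'(u,u_0) > 0$ with $\theta_0((\sqrt{u}+s)^2) - \theta_0(u) \ge c'\,s$ for all $s \in [0,M]$. Picking $\chi \in C^\infty_0(\IR^d)$ with $0 \le \chi \le 1$ and $\chi \equiv 1$ on $D$, and testing (\ref{0.9}) with $\varphi_c = c\,\chi$, $c = \delta/c'$, yields a valid competitor as soon as $\delta \le c' M$, with Dirichlet energy $\tfrac{1}{2d}\int |\nabla\varphi_c|^2 \le C_0(u,u_0,D)\,\delta^2$. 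The identification~(\ref{0.13}) then transfers this bound to $\ov{J}\,^{\!D}_{u,\nu}$.

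Step~(ii) is the principal obstacle, as a quadratic bound on the Dirichlet energy does not in general control the $L^\infty$-norm. By Theorem~\ref{theo0.2}, $\varphi$ is nonnegative, harmonic on $\IR^d \setminus D$, and decays like $|z|^{-(d-2)}$ at infinity, so by the maximum principle $\|\varphi\|_\infty = \sup_D \varphi$. To control the latter the idea is a variational comparison: fix a threshold $h > 0$, replace $\varphi$ by the truncation $\varphi \wedge h$, and compensate the resulting loss in the constraint by adding a small bump $\varepsilon\eta$ with $\eta \in C^\infty_0(\IR^d)$, $\eta \ge 0$, $\eta \equiv 1$ on $D$. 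By the $C^1$-smoothness and positivity of $\theta_0'$ on $[0,u_0]$ this bump produces a gain linear in $\varepsilon$ in the constraint at a quadratic cost $\varepsilon^2\int |\nabla\eta|^2$ in the energy, whereas the loss from truncation on $\{\varphi > h\} \cap D$ is at most $(1-\theta_0(u_0))\,|\{\varphi > h\}\cap D|$. Combining this with the energy bound from step~(i) and controlling $|\{\varphi > h\} \cap D|$ by Chebyshev's inequality in the Sobolev space $L^{2d/(d-2)}$ (into which $D^1(\IR^d)$ embeds) forces $h \le c_2(u,u_0)\,\delta$, else the minimality of $\varphi$ is contradicted.

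With $\|\varphi\|_\infty \le c_2\,\delta \le M$ secured, step~(iii) becomes a routine regularity argument. On $\{(\sqrt{u}+\varphi)^2 \le u_0\}$, which is all of $\IR^d$ up to a null set, $\ov{\theta}_0 = \theta_0$ is $C^1$ and the constraint functional is Fr\'echet $C^1$ on a neighborhood of $\varphi$ in $D^1(\IR^d)$. The Lagrange multiplier rule yields
\begin{equation*}
-\tfrac{1}{d}\,\Delta\varphi \;=\; 2\lambda\,(\sqrt{u}+\varphi)\,\theta_0'\big((\sqrt{u}+\varphi)^2\big)\,\mathbf{1}_D
\end{equation*}
in $\mathcal{D}'(\IR^d)$ for some Lagrange multiplier $\lambda \ge 0$. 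Since the right-hand side is bounded and compactly supported, $W^{2,p}$-theory gives $\varphi \in W^{2,p}_{\rm loc}(\IR^d)$ for every $p < \infty$, and Morrey's embedding then yields $\varphi \in C^{1,\alpha}(\IR^d)$ for every $0 < \alpha < 1$, completing the proof.
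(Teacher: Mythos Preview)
Your step (ii) has a genuine gap. The truncation-plus-compensation comparison, once the minimality of $\varphi$ is invoked, yields at best an inequality of the form
\[
\cD\big((\varphi - h)_+\big) \;\le\; C\,\varepsilon\,\cD(\varphi)^{1/2}\,\cD(\eta)^{1/2} + \varepsilon^2\,\cD(\eta),
\]
with $\varepsilon$ chosen of order $m_D(\varphi>h)$ so as to restore the constraint. This is an \emph{upper} bound on the energy carried by the excess $(\varphi-h)_+$; it does not force $\{\varphi>h\}$ to be empty. Indeed, $\cD((\varphi-h)_+)$ can be arbitrarily small while $\{\varphi>h\}$ has positive measure---for instance if $\varphi$ exceeds $h$ only by a tiny margin there---and the Chebyshev bound in $L^{2d/(d-2)}$ on $|\{\varphi>h\}|$ is fully compatible with $\|\varphi\|_\infty$ attaining the a~priori ceiling $\sqrt{u_*}-\sqrt{u}$ from (\ref{0.15}). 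The assertion ``forces $h \le c_2\delta$'' is therefore unjustified, and no simple iteration of this comparison produces the needed pointwise bound. (Note also that the energy of $\varphi\wedge h + \varepsilon\eta$ contains a cross term of order $\varepsilon\delta$, not just the $\varepsilon^2$ term you quote.)

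This is exactly the difficulty the paper's proof is built to bypass. Rather than trying to bound $\|\varphi\|_\infty$ \emph{before} writing an Euler--Lagrange equation, the paper replaces $\theta_0$ by a globally $C^1$ function $\wt{\theta}$ agreeing with $\theta_0$ on $[0,u_0]$ (Lemma~\ref{lem3.1}), so that the associated constraint functional $\wt{A}$ is Fr\'echet $C^1$ on all of $D^1(\IR^d)$ (Lemma~\ref{lem3.2}). Minimizers $\wt{\varphi}$ of the auxiliary problem $\wt{J}\,^{\!D}_{u,\nu}$ in (\ref{3.15}) then satisfy the identity $\wt{\varphi}=\wt{\lambda}\,G\big(\wt{\eta}\,'(\sqrt{u}+\wt{\varphi})\,1_D\big)$ unconditionally, and the $L^\infty$ bound $\|\wt{\varphi}\|_\infty \le c_2\,(\nu-\theta_0(u))$ follows immediately from the boundedness of $\wt{\eta}\,'$ and of $G(1_D)$, combined with a two-sided estimate (\ref{3.20}) on the multiplier $\wt{\lambda}$. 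Only afterwards does one check that, for $\nu-\theta_0(u)$ small, the auxiliary and original problems share the same minimizers. Your step (iii) would face a related obstacle even if step (ii) were granted: the functional $\varphi\mapsto\int_D\hspace{-2.2ex}-\;\ov{\theta}_0((\sqrt{u}+\varphi)^2)\,dz$ is not obviously Fr\'echet $C^1$ on a $D^1$-neighborhood of the minimizer, since $D^1$-small perturbations need not be $L^\infty$-small and $\ov{\theta}_0$ is only known to be $C^1$ on a neighborhood of $[0,u_0]$; the auxiliary construction handles this cleanly.
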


In view of Theorem \ref{theo0.1} the above Theorem \ref{theo0.3} applies to any $u_0 < \wh{u}$ (with $\wh{u}$ as in (\ref{0.3})). It describes a regime of ``small excess'' for $\nu$ where minimizers do not reach the threshold value $\sqrt{u}_* - \sqrt{u}$. In the proof of Theorem \ref{theo0.3} we use the $C^1$-property to write an Euler-Lagrange equation for the minimizers, see (\ref{3.19}), and derive a bound in terms of $\nu - \theta_0(u)$ of the corresponding Lagrange multipliers, see (\ref{3.20}). It is an interesting open problem whether a regime of ``large excess'' for $\nu$ can be singled out where some (or all) minimizers of  (\ref{0.14}) reach the threshold value $\sqrt{u}_* - \sqrt{u}$ on a set of positive Lebesgue measure. We refer to Remark \ref{rem3.4} for some simple minded observations related to this issue.

Finally, let us state our convention about constants. Throughout we denote by $c,c',\wt{c}$ positive constants changing from place to place that simply depend on the dimension $d$. Numbered constants $c_0,c_1,c_2,\dots$ refer to the value corresponding to their first appearance in the text. Dependence on additional parameters appears in the notation.

\section{The $C^1$-property of $\theta_0$}

The main object of this section is to prove Theorem \ref{theo0.1} stated in the Introduction. Theorem \ref{theo0.1} is the direct consequence of the following Lemma \ref{lem1.1} and Proposition \ref{prop1.2}. We let $g(\cdot,\cdot)$ stand for the Green function of the simple random walk on $\IZ^d$.
\begin{lemma}\label{lem1.1}
For $0 \le u < u_*$, one has
\begin{equation}\label{1.1}
\liminf\limits_{\ve \downarrow 0} \; \mbox{\f $\dis\frac{1}{\ve}$} \;\big(\theta_0 (u  + \ve) - \theta_0(u)\big) \ge \big(1- \theta_0(u)\big) \; \mbox{\f $\dis\frac{1}{g(0,0)}$}.
\end{equation}
\end{lemma}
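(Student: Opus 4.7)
The plan is to exploit the Poissonian construction of random interlacements, under which $\cI^{u+\ve}$ decomposes as the disjoint union $\cI^u \cup \cI^{(u,u+\ve]}$ of two \emph{independent} parts, where $\cI^{(u,u+\ve]}$ denotes the occupied set of trajectories in the underlying Poisson point process carrying a label in $(u,u+\ve]$. Importantly, $\cI^{(u,u+\ve]}$ has the same law as $\cI^\ve$, so by the classical single-site formula for random interlacements one has $\IP[0 \in \cI^{(u,u+\ve]}] = 1 - e^{-\ve/g(0,0)}$.

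Next I would rewrite the increment of $\theta_0$ as
\begin{equation*}
\theta_0(u+\ve) - \theta_0(u) = \IP\bigl[0 \uu \infty,\; 0 \not\leftrightarrow_{u+\ve} \infty\bigr],
\end{equation*}
using the monotone inclusion $\cV^{u+\ve} \subseteq \cV^u$. The key observation is that the auxiliary event $\{0 \in \cI^{(u,u+\ve]}\}$ forces $0 \notin \cV^{u+\ve}$, hence trivially $\{0 \not\leftrightarrow_{u+\ve} \infty\}$. Therefore
\begin{equation*}
\theta_0(u+\ve) - \theta_0(u) \ge \IP\bigl[0 \uu \infty,\; 0 \in \cI^{(u,u+\ve]}\bigr].
\end{equation*}

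The event $\{0 \uu \infty\}$ is measurable with respect to $\cI^u$, while $\{0 \in \cI^{(u,u+\ve]}\}$ is measurable with respect to the independent increment $\cI^{(u,u+\ve]}$. Independence then yields
\begin{equation*}
\theta_0(u+\ve) - \theta_0(u) \ge \bigl(1 - \theta_0(u)\bigr)\bigl(1 - e^{-\ve/g(0,0)}\bigr).
\end{equation*}
Dividing by $\ve$ and letting $\ve \downarrow 0$ gives the desired bound.

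There is no real technical obstacle here; the proof is short and relies only on the monotone coupling of interlacements at different levels together with the trivial sufficient condition for disconnection ``$0$ gets hit by a new trajectory''. I would only remark that this strategy discards all disconnection events coming from new points located \emph{away} from $0$ (e.g.\ at cut sites of the infinite cluster), so the inequality need not be tight; however, a sharper lower bound is not required for the sequel, since this one already matches the derivative that will be obtained from above in the complementary Proposition \ref{prop1.2}.
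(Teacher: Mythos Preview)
Your proof is correct and follows essentially the same argument as the paper: decompose the increment $\theta_0(u+\ve)-\theta_0(u)$ as the probability that $0$ lies in the infinite cluster of $\cV^u$ but not of $\cV^{u+\ve}$, lower-bound the second event by $\{0\in\cI^{(u,u+\ve]}\}$, use independence of the two interlacement layers, and apply the single-site formula. One small comment on your closing remark: Proposition~\ref{prop1.2} does not produce a matching \emph{upper} bound on the derivative --- it establishes the $C^1$-property, after which Lemma~\ref{lem1.1} is invoked only to conclude strict positivity of $\theta_0'$.
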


\begin{proposition}\label{prop1.2}
For any $0 \le \alpha < \beta < u_*$ such that NLF$(\alpha,\beta)$ holds (see (\ref{0.2})), 
\begin{equation}\label{1.2}
\mbox{$\theta_0$ is $C^1$ on $\Big[\alpha, \mbox{\f $\dis\frac{\alpha + \beta}{2}$}\Big]$}.
\end{equation}
\end{proposition}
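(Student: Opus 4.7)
The approach is to approximate $\theta_0$ by a smooth local surrogate and transfer regularity. For $L\geq L_0(\alpha,\beta)$, define
\[
p_L(u):=\IP[0\u\partial B_L],\qquad u\in[\alpha,\beta].
\]
Since $\{0\uu\partial B_L\}\supseteq\{0\uu\infty\}$, the difference $\theta_0(u)-p_L(u)$ equals $\IP[0\uu\partial B_L,\ 0\u\infty]$, which by NLF$(\alpha,\beta)$ is at most $e^{-c_0 L^\gamma}$, uniformly in $u\in[\alpha,\beta]$. The event $\{0\u\partial B_L\}$ depends only on the (finitely many) trajectories of $\cI^u$ that hit $B_L$; these form a Poisson point process with intensity linear in $u$. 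Expanding in powers of $u$ shows that $p_L$ is entire analytic, and a Mecke-type (Palm) formula furnishes
\[
f_L(u):=p_L'(u)=\IE\Big[\int \nu^*(d\omega)\, 1\{0\uu\partial B_L\mbox{ in }\cV^u,\ 0\u\partial B_L\mbox{ in }\cV^u\setminus\omega(\IN)\}\Big],
\]
where $\nu^*$ denotes the intensity measure on trajectories modulo time-shift. Thus each $f_L$ is continuous in $u$, and integrating gives, for $u_1\le u_2$ in $[\alpha,\beta]$,
\[
\theta_0(u_2)-\theta_0(u_1)=\int_{u_1}^{u_2}f_L(v)\,dv + R_L(u_1,u_2),\qquad |R_L|\le 2e^{-c_0 L^\gamma}.
\]

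The heart of the proof is to pass to the limit $L\to\infty$ and show that $f_L\to f$ uniformly on $[\alpha,(\alpha+\beta)/2]$, where $f$ is the natural analogue of $f_L$ with $\partial B_L$ replaced by $\infty$. The discrepancy $|f_L-f|$ decomposes into two contributions: (i) trajectories $\omega$ that sever $0$ from $\partial B_L$ but not from $\infty$ (they cut off a large finite piece of the cluster which happens to reach $\partial B_L$); and (ii) trajectories $\omega$ not intersecting $B_L$ that sever $0$ from $\infty$ (after removal the cluster of $0$ is finite yet still reaches $\partial B_L$). In both cases, the post-insertion configuration exhibits a finite cluster of $0$ of diameter $\ge L$, whose probability is bounded by $e^{-c_0L^\gamma}$ via NLF$(\alpha,\beta)$; the Mecke formula translates this pointwise estimate into a stretched-exponential bound on $|f_L-f|$. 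Once uniform convergence is established, the identity $\theta_0(u_2)-\theta_0(u_1)=\int_{u_1}^{u_2}f(v)\,dv$ holds on $[\alpha,(\alpha+\beta)/2]$, and continuity of $f$ (inherited as a uniform limit of continuous $f_L$) yields $\theta_0\in C^1$ there with $\theta_0'=f$.

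\textbf{Main obstacle.} The chief technical obstacle is the uniform convergence $f_L\to f$, especially for contribution (ii): trajectories outside $B_L$ can have arbitrarily large range, so one needs a careful integrability estimate against $\nu^*$ combined with NLF to convert the stretched-exponential bound on large finite clusters into a bound on the associated disconnection intensity. The restriction to the smaller interval $[\alpha,(\alpha+\beta)/2]$, rather than all of $[\alpha,\beta]$, provides a buffer of size $(\beta-\alpha)/2$ in the parameter $u$, so that NLF-type estimates at nearby levels $u+h\in[\alpha,\beta]$ remain available when these derivative bounds are extracted; the remaining propagation of continuity and differentiability back to $\theta_0$ is then standard.
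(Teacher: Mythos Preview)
Your proposal shares the two key ingredients with the paper's proof --- the local approximation $p_L(u)=\theta_{0,L}(u)$ and a Russo/Mecke-type derivative formula (the paper's (1.33) is exactly your $f_L$, cf.\ also the reference to de~Bernardini--Popov there). The difference lies in the execution: the paper never attempts to define or control the infinite-volume derivative $f$ directly. Instead, its Lemma~1.3 compares two \emph{finite-scale} difference quotients $\Delta'=\frac{\theta_0(u')-\theta_0(u)}{u'-u}$ and $\Delta''=\frac{\theta_0(u'')-\theta_0(u)}{u''-u}$, with truncation scales $L',L''$ chosen as logarithmic functions of $u'-u$, $u''-u$. The comparison (1.8) is then telescoped over a dyadic sequence $u_i=u+2^{i-1}(u'-u)$ to show that the difference quotients form a Cauchy family as $u'\downarrow u$, which yields differentiability; continuity of $\theta_0'$ follows because it is the uniform limit of the continuous maps $u\mapsto\eta^{-1}(\theta_0(u+\eta)-\theta_0(u))$.

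Your direct route --- proving $f_L\to f$ uniformly --- has a genuine gap exactly where you flag the ``main obstacle''. For contribution~(ii) you must integrate $\nu^*(d\omega)$ over trajectories not hitting $B_L$, a set of infinite $\nu^*$-mass, and you appeal to NLF for the post-insertion configuration $\cV^u\setminus\omega(\IZ)$. But NLF$(\alpha,\beta)$ is a statement about $\cV^v$ for $v\in[\alpha,\beta]$, and $\cV^u\setminus\omega(\IZ)$ is not of this form for any $v$. The paper handles the analogous issue \emph{at finite scale} by absorbing the extra trajectory into $\cV^{u''}$ via the Poisson identity (1.34), conditioning on $\{N_{u,u''}(B_{L'})=1\}$; this works precisely because one stays inside $B_{L'}$, so the relevant $\nu^*$-mass is ${\rm cap}(B_{L'})<\infty$ and the resulting event is $B_{L'}$-measurable, after which NLF at level $u''\le\beta$ applies via (1.36)--(1.37). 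Your sentence ``the Mecke formula translates this pointwise estimate into a stretched-exponential bound on $|f_L-f|$'' supplies no such mechanism at infinite volume, and in fact you have not even established that $f$ is finite. To repair the argument you would need to show that $(f_L)$ is uniformly Cauchy by comparing $f_L$ with $f_{L'}$ for $L<L'<\infty$ --- which is essentially what the paper's Lemma~1.3 accomplishes, in difference-quotient form.
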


As we now explain, Theorem \ref{theo0.1} follows immediately. By Proposition \ref{prop1.2} and a covering argument, one see that $\theta_0$ is $C^1$ on $[0,\wh{u})$. Then, by Lemma \ref{lem1.1}, one finds that $\theta'_0 > 0$ on $[0,\wh{u})$, and Theorem \ref{theo0.1} follows.

There remains to prove Lemma \ref{lem1.1} and Proposition \ref{prop1.2}.

\bigskip\n
{\it Proof of Lemma \ref{lem1.1}}: Consider $u \ge 0$ and $\ve > 0$ such that $u + \ve < u_*$. Then, denoting by $\cI^{u,u + \ve}$ the collection of sites of $\IZ^d$ that are visited by trajectories of the interlacement with level lying in $(u,u+ \ve]$, we have
\begin{equation}\label{1.3}
\begin{array}{lll}
\theta_0(u + \ve) - \theta_ 0(u) & = &\hspace{-3ex}  \IP[ 0 \uu \; \infty, \; 0 \uv \; \infty]
\\
&\ge&\hspace{-3ex} \IP [ 0 \uu \;\infty, \;0 \in \cI^{u,u + \ve}]
\\
&\hspace{-4ex}\stackrel{\rm independence}{=}& \big(1 - \theta_0(u)\big) \, \IP[0 \in \cI^{u,u + \ve}] 
\\
&\!=&\hspace{-3ex} \big(1 - \theta_0(u)\big) (1 - e^{-\ve / g(0,0)}).
\end{array}
\end{equation}
Dividing by $\ve$ both members of (\ref{1.3}) and letting $\ve$ tend to $0$ yields (\ref{1.1}). This proves Lemma \ref{lem1.1}. \hfill $\square$

We now turn to the proof of Proposition \ref{prop1.2}. An important tool is Lemma \ref{lem1.3} below. We will use Lemma  \ref{lem1.3} to gain control over the difference quotients of $\theta_0$, as expressed in  (\ref{1.10}) or  (\ref{1.20}) below. The claimed $C^1$-property of $\theta_0$ on $[\alpha, \frac{\alpha + \beta}{2}]$ will then quickly follow, see below  (\ref{1.20}). To prove  (\ref{1.10}) with Lemma \ref{lem1.3}, we define an increasing sequence of levels $u_i$, $1 \le i  \le i_\eta$ so that $u_1 = u'$ (in Proposition  \ref{prop1.2}) and $u_i - u$ doubles as $i$ increases by one unit, until it reaches $\eta$ (of (\ref{1.10})), and in essence apply Lemma \ref{lem1.3} repeatedly to compare the successive difference quotients of $\theta_0$ between $u$ and $u_i$, see (\ref{1.15}) till  (\ref{1.19}).

{\it Proof of Proposition \ref{prop1.2}}: We consider $0 \le \alpha, \beta < u_*$ such that NLF$(\alpha, \beta)$ holds (see (0.2)), and set
\begin{equation}\label{1.4}
c_3 \,(\alpha, \beta) = 2 / c_0 \,.
\end{equation}
As mentioned above, an important tool in the proof of Proposition \ref{prop1.2} is provided by

\begin{lemma}\label{lem1.3}
Consider $u < u' \le u''$ in $[\alpha, \frac{\alpha + \beta}{2}]$ such that
\begin{equation}\label{1.5}
u'' - u \le e^{- \frac{1}{c_3} \,L_0^\gamma} \; (\le 1),
\end{equation}
and set
\begin{equation}\label{1.6}
\Delta' = \mbox{\f $\dis\frac{1}{u' - u}$} \;\big(\theta_0(u') - \theta_0(u)\big) \;\;\mbox{and} \; \; \Delta'' = \mbox{\f $\dis\frac{1}{u'' - u}$} \; \big(\theta_0(u'') - \theta_0(u)\big),
\end{equation}
as well as $L' \ge L'' \ge L_0$ (with $L_0$ as in (\ref{0.2})) via
\begin{equation}\label{1.7}
L' = \Big(c_3 \; \log \; \mbox{\f $\dis\frac{1}{u' - u}$}\Big)^{1/\gamma} \;\; \mbox{and} \;\; L'' =  \Big(c_3 \; \log \; \mbox{\f $\dis\frac{1}{u'' - u}$}\Big)^{1/\gamma} .
\end{equation}
Then, with ${\rm cap}(\cdot)$ denoting the simple random walk capacity, one has 
\begin{equation}\label{1.8}
|\Delta ' - e^{(u'' - u') \,{\rm cap}(B_{L'})}  \,\Delta'' | \le 3(u'' - u) \big(1 + {\rm cap}(B_{L'})^2\big) \,e^{(u'' - u') \,{\rm cap}(B_{L'})}.
\end{equation}
\end{lemma}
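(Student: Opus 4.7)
\bigskip\noindent
\textit{Plan.} My plan is to localize the disconnection events inside $B_{L'}$ via NLF$(\alpha,\beta)$, and then to exploit the Poisson structure of the trajectories at levels in $(u,u'']$ hitting $B_{L'}$.

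\smallskip\noindent
For the localization, set
\begin{equation*}
f(v) = \IP\big[0 \uu \infty,\; 0 \mbox{ is not connected to } \partial B_{L'} \mbox{ in } \cV^v\big],\quad v \in \{u',u''\}.
\end{equation*}
Then $(\theta_0(v) - \theta_0(u)) - f(v) = \IP[0 \uu \infty,\, 0 \stackrel{v}{\lla} \partial B_{L'},\, 0 \mbox{ not connected to } \infty \mbox{ in } \cV^v]$, which by NLF$(\alpha,\beta)$ applied at $v \in [\alpha,\beta]$ is at most $e^{-c_0 (L')^\gamma} = (u'-u)^2$ (using (\ref{1.7}) and $c_3 = 2/c_0$; note $L' \ge L'' \ge L_0$ by (\ref{1.5})). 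Thus $\Delta'$ and $\Delta''$ agree with $\tilde\Delta' := f(u')/(u'-u)$ and $\tilde\Delta'' := f(u'')/(u''-u)$ up to errors $O(u'-u)$ and $O((u'-u)^2/(u''-u))$ respectively, which will easily be absorbed in the right-hand side of (\ref{1.8}).

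\smallskip\noindent
Next I would condition on $\cI^u$, which fixes $\{0 \uu \infty\}$ and determines the cluster $K$ of $0$ in $\cV^u \cap B_{L'}$. The in-box trace $\cI^{u,v} \cap B_{L'}$ is then independent of $\cI^u$ and has the law of the trace of $\cI^{v-u}$ in $B_{L'}$: a Poisson number (mean $(v-u)\,{\rm cap}(B_{L'})$) of i.i.d.\ trajectories through $B_{L'}$. On $\{0 \uu \infty\}$, the event defining $f(v)$ holds iff this collection separates $0$ from $\partial B_{L'}$ inside $K$. Writing $q_n(K) \in [0,1]$ for the conditional disconnection probability given $n$ such trajectories (with $q_0(K) = 0$), one obtains the Poisson expansion
\begin{equation*}
e^{(v-u)\,{\rm cap}(B_{L'})}\, f(v) = \IE\Big[1_{\{0 \uu \infty\}} \sum_{n \ge 1} \frac{\big((v-u)\,{\rm cap}(B_{L'})\big)^n}{n!}\, q_n(K) \Big].
\end{equation*}

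\smallskip\noindent
Dividing by $v - u$ and subtracting at $v = u''$ and $v = u'$, the $n = 1$ terms cancel since they carry the factor $((v-u)\,{\rm cap}(B_{L'}))^0 = 1$. For $n \ge 2$, the bounds $q_n \le 1$ and $(u''-u)^{n-1} - (u'-u)^{n-1} \le (u''-u)^{n-1}$ together with the elementary inequality $e^x - 1 - x \le \tfrac12 x^2 e^x$ yield an upper bound of $\tfrac12 (u''-u)\,{\rm cap}(B_{L'})^2\, e^{(u''-u)\,{\rm cap}(B_{L'})}$. Multiplying through by $e^{-(u'-u)\,{\rm cap}(B_{L'})}$ replaces the exponent by $\mu := (u''-u')\,{\rm cap}(B_{L'})$, giving $0 \le e^\mu \tilde\Delta'' - \tilde\Delta' \le \tfrac12 (u''-u)\,{\rm cap}(B_{L'})^2 e^\mu$; combining with the localization errors from the first step delivers (\ref{1.8}) with constant $3$. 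The main obstacle will be the clean Poissonian identification in the second step: one must carefully decompose the interlacement both by level range and by whether each trajectory hits $B_{L'}$, to reduce the in-box disconnection event to a Poisson-driven computation depending only on $K$. Once this factorization is in place, the $n = 1$ cancellation and the elementary Poisson-series estimate are routine.
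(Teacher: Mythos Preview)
Your argument is correct and in fact somewhat cleaner than the paper's. The key difference lies in the localization step and in how the Poisson structure is exploited. The paper replaces $\theta_0$ by the finite-box quantity $\theta_{0,L}(v)=\IP[0\stackrel{v}{\longleftrightarrow}\hspace{-2.8ex}\mbox{\f /}\;\;\;\partial B_L]$ at \emph{both} levels, using two different scales $L'$ and $L''$ for $u'$ and $u''$; it then isolates the event $\{N_{u,u'}(B_{L'})=1\}$, rewrites the single extra trajectory via an independent walk from $\ov{e}_{B_{L'}}$, and must afterwards perform a box-switch $(L'\to L'')$ controlled by a second application of NLF (equations (1.36)--(1.37)). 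By contrast, you keep the level-$u$ connection to infinity intact and only localize the level-$v$ disconnection inside the \emph{single} box $B_{L'}$; conditioning on $\cI^u$ then reduces $f(v)$ to a Poisson series in $(v-u)\,{\rm cap}(B_{L'})$ with coefficients $q_n(K)$ that are common to $v=u'$ and $v=u''$, so the $n=1$ term cancels exactly upon forming $e^{(u''-u)\,{\rm cap}(B_{L'})}\tilde\Delta''-e^{(u'-u)\,{\rm cap}(B_{L'})}\tilde\Delta'$. This sidesteps both the auxiliary random walk and the box-switch, and the remaining $n\ge 2$ tail is handled by the same elementary bound $e^x-1-x\le \tfrac12 x^2 e^x$ that underlies the paper's use of $\IP[Z\ge 2]\le \lambda^2/2$. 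The price is that your localization error for $\tilde\Delta''$ is $(u'-u)^2/(u''-u)$ rather than $u''-u$, but since $u'\le u''$ this is harmless and you still land comfortably inside the constant $3$ of (1.8).
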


\pagebreak
Let us first admit Lemma \ref{lem1.3} and conclude the proof of Proposition \ref{prop1.2} (i.e. that $\theta_0$ is $C^1$ on $[\alpha, \frac{\alpha + \beta}{2}]$). We introduce
\begin{equation}\label{1.9}
\eta_0 = \mbox{\f $\dis\frac{1}{4}$} \;\Big(\mbox{\f $\dis\frac{\beta - \alpha}{2}$} \wedge e^{-\frac{1}{c_3} \,L_0^\gamma}\Big) \;\;\Big( \le \mbox{\f $\dis\frac{1}{4}$} \Big).
\end{equation}

\bigskip\noindent
We will use Lemma \ref{lem1.3} to show that
\begin{equation}\label{1.10}
\begin{array}{l}
\mbox{when $0 < \eta \le \eta_0$, then for all $u < u'$ in $\Big[\alpha, \mbox{\f $\dis\frac{\alpha, \beta}{2}$}\Big]$ with $u' \le u + \eta$, one has}
\\
\Big| \mbox{\f $\dis\frac{1}{u' - u}$} \;\big(\theta_0(u') - \theta_0(u)\big) - \mbox{\f $\dis\frac{1}{\eta}$} \;\big(\theta_0(u + \eta) - \theta_0(u)\big)\Big| \le c(\alpha,\beta) \,\sqrt{\eta}.
\end{array}
\end{equation}

\medskip\noindent
Once (\ref{1.10}) is established, Proposition \ref{prop1.2} will quickly follow (see below (\ref{1.20})). For the time being we will prove (\ref{1.10}). To this end we set
\begin{equation}\label{1.11}
\mbox{$u_i = 2^{i - 1} (u' - u) + u$, for $1 \le i \le i_\eta$, where $i_\eta = \max\{ i \ge 1, u_i \le u + \eta\}$}
\end{equation}
(note that $u_1 = u')$, as well as
\begin{equation}\label{1.12}
\begin{array}{l}
\mbox{$\Delta_i = \mbox{\f $\dis\frac{1}{u_i - u} $} \;\big(\theta_0(u_i) - \theta_0(u)\big) \;\; \mbox{and} \;\;L_i = \Big(c_3 \,\log \;\mbox{\f $\dis\frac{1}{u_i - u}$}\Big)^{\frac{1}{\gamma}} \;(\stackrel{(\ref{1.9})}{\ge} L_0)$,}
\\[2ex]
\qquad \mbox{for $1 \le i \le i_\eta$.}
\end{array}
\end{equation}
We also define
\begin{equation}\label{1.13}
\delta_i = (u_i - u) \,{\rm cap}(B_{L_i}) \;\;\mbox{and} \;\; \wt{\delta}_i = 6(u_i - u) + 6 \delta_i \,{\rm cap}(B_{L_i}),\;
\mbox{for} \;\; 1 \le i \le i_\eta.
\end{equation}
We will apply (\ref{1.8}) of Lemma \ref{lem1.3} to $u' = u_i$, $u'' = u_{i+1}$, when $1 \le i < i_\eta$, and to $u' = u_{i_\eta}$, $u'' = u + \eta$. We note that for $1 \le i \le i_\eta$, we have $\delta_i \le c(\alpha, \beta) \,\sqrt{u_i - u}$ and $\wt{\delta}_i \le c(\alpha, \beta) \,\sqrt{u_i - u}$ so that
\begin{equation}\label{1.14}
\mbox{for $1 \le j \le i_\eta$, $\dis\sum\limits_{1 \le i \le j} \delta_i \le c(\alpha, \beta)\, \sqrt{u_j - u}$ and $\dis\sum\limits_{1 \le i \le j} \wt{\delta}_i \le c(\alpha, \beta) \,\sqrt{u_j - u}$}.
\end{equation}
The application of (\ref{1.8}) to $u'' = u_{i+1}$, $u' = u_i$, for $1 \le i < i_\eta$ and the observation that $u_{i+1} - u_i = u_i - u$ yield the inequality
\begin{equation}\label{1.15}
\begin{array}{l}
\mbox{$| \Delta_i - e^{\delta_i} \Delta_{i + 1}| \le c \,\wt{\delta}_i\,e^{\delta_i}$, for $1 \le i < i_\eta$, so that}
\\
\big| e^{\sum_{\ell < i} \,\delta_\ell} \Delta_i - e^{\sum_{\ell < i + 1} \delta_\ell} \Delta_{i+1}\big| \le c \,e^{\sum_{\ell < i + 1} \delta_\ell}\, \wt{\delta}_i, \;\mbox{for $1 \le i < i_\eta$}.
\end{array}
\end{equation}
Hence, adding these inequalities, we find that
\begin{equation}\label{1.16}
\big| \Delta_1 - e^{\sum_{\ell < i_\eta} \delta_\ell} \Delta_{i_\eta} \big| \le c \, \dis\sum\limits_{1 \le i < i_\eta}  e^{\sum_{\ell < i +1} \delta_\ell} \;\wt{\delta}_i \stackrel{(\ref{1.14}), \eta \le \frac{1}{4}}{\le} c(\alpha, \eta) \, \sqrt{\eta}\,.
\end{equation}
Then, the application of (\ref{1.8}) to $u'' = u + \eta$ and $u' = u_{i_\eta}$, noting that $u + \eta - u_{i_\eta} \le u_{i_\eta} - u$, yields
\begin{equation}\label{1.17}
\Big|\Delta_{i_\eta}  - e^{(u + \eta - u_{i_\eta}) \,{\rm cap}(B_{L_{i_\eta}})} \;\mbox{\f $\dis\frac{1}{\eta}$} \;\big(\theta_0(u + \eta) - \theta_0(u)\big)\Big|  \le \wt{\delta}_{i_\eta} \;e^{\delta_{i_\eta}}
 \le c(\alpha, \beta) \, \sqrt{\eta}.
\end{equation}
Multiplying both members of (\ref{1.17}) by $e^{\sum_{\ell < i_\eta} \delta_\ell}$ and using (\ref{1.16}) and (\ref{1.14}) as well, we thus find
\begin{equation}\label{1.18}
\begin{array}{l}
\Big| \mbox{\f $\dis\frac{1}{u' - u}$} \;\big(\theta_0(u') - \theta_0 (u)\big) - e^{\sum_{\ell < i_\eta} \delta_\ell + (u + \eta - u_{i_\eta}) \,{\rm cap}(B_{L_{i_\eta}})} \mbox{\f $\dis\frac{1}{\eta}$} \;\big(\theta_0(u + \eta) - \theta_0(u)\big)\Big| 
\\[2ex]
\le c(\alpha,\beta) \, \sqrt{\eta}
\end{array}
\end{equation}
and the term inside the exponential is at most $c(\alpha,\beta) \,\sqrt{\eta}$.

\bigskip
Applying (\ref{1.18}) with the choice $\eta = \eta_0$, see (\ref{1.9}), one obtains that
\begin{equation}\label{1.19}
\sup\limits_{\alpha \le u < u' \le \frac{\alpha + \beta}{2}\,, u' \le u + \eta_0} \;\mbox{\f $\dis\frac{1}{u' - u}$} \; \big(\theta_0(u') - \theta_0(u)\big) \le c(\alpha,\beta).
\end{equation}
Coming back to (\ref{1.18}), with the help of the observation below (\ref{1.18}) and the inequality $e^a - 1 \le c'(\alpha,\beta) a$ for $0 \le a \le c(\alpha,\beta)$, one obtains the claim (\ref{1.10}).

We will now see how the $C^1$-property of $\theta_0$ on $[\alpha, \frac{\alpha + \beta}{2}]$ (i.e. Proposition \ref{prop1.2}) follows. We note that for $v,w \in [\alpha, \frac{\alpha + \beta}{2}]$ with $0 < |v - w| \le \eta(\le \eta_0)$, the claim (\ref{1.10}) applied to $u = v \wedge w$ and $u' = v \vee w$ yields that
\begin{equation}\label{1.20}
\Big| \mbox{\f $\dis\frac{1}{w-v}$} \;\big(\theta_0(w) - \theta_0(v)\big) - \mbox{\f $\dis\frac{1}{\eta}$} \;\big(\theta_0 (v \wedge w + \eta) - \theta_0(v \wedge w)\big) \Big| \le c(\alpha,\beta) \,\sqrt{\eta}.
\end{equation}

Letting $\Gamma(\cdot)$ stand for the modulus of continuity of $\theta_0$ on the interval $[\alpha, \frac{\alpha + \beta}{2}] \subseteq [0,u_*)$, we find that for $v,w \in [\alpha, \frac{\alpha + \beta}{2}]$ with $0 < |v - w| \le \eta$ $(\le \eta_0)$, one has
\begin{equation}\label{1.21}
\begin{array}{l}
\Big| \mbox{\f $\dis\frac{1}{w-v}$} \;\big(\theta_0(w) - \theta_0(v)\big) - \mbox{\f $\dis\frac{1}{\eta}$} \;\big(\theta_0 (v  + \eta) - \theta_0(v)\big) \Big| \le 
\\
c(\alpha,\beta) \,\sqrt{\eta} + \mbox{\f $\dis\frac{2}{\eta}$} \;\Gamma(|w - v|).
\end{array}
\end{equation}

The above inequality implies that for any $v \in [\alpha, \frac{\alpha + \beta}{2}]$, when $w \in [\alpha, \frac{\alpha + \beta}{2}]$ tends to $v$, the difference quotients $\frac{1}{w-v} \,(\theta_0(w) - \delta_0(v))$ are Cauchy. Thus, letting $w$ tend to $v$, we find that
\begin{equation}\label{1.22}
\begin{array}{l}
\mbox{$\theta_0$ is differentiable on $[\alpha, \frac{\alpha + \beta}{2}]$, and for $0 < \eta \le \eta_0$ and $v \in [\alpha, \frac{\alpha + \beta}{2}]$},
\\
\Big| \theta'_0(v) - \mbox{\f $\dis\frac{1}{\eta}$} \;\big(\theta_0(v + \eta) - \theta_0(v)\big) \Big| \le c(\alpha,\beta) \,\sqrt{\eta}.
\end{array}
\end{equation}
As a result we see that $\theta'_0$ is the uniform limit on $[\alpha, \frac{\alpha + \beta}{2}]$ of continuous functions, and as such $\theta'_0$ is continuous. This is the claimed $C^1$-property of Proposition \ref{prop1.2}. The last missing ingredient is the

\medskip
{\it Proof of Lemma \ref{lem1.3}}: We introduce the notation for $v \ge 0$ and $L \ge 1$
\begin{equation}\label{1.23}
\theta_{0,L}(v) = \IP[0 \vv \partial B_L],
\end{equation}

\medskip\noindent
and the approximations of $\Delta'$ and $\Delta''$ in (\ref{1.6})
\begin{equation}\label{1.24}
\wt{\Delta}' = \mbox{\f $\dis\frac{1}{u' - u}$} \;\big(\theta_{0,L'}(u') - \theta_{0,L'}(u)\big), \;\wt{\Delta}'' =  \mbox{\f $\dis\frac{1}{u'' - u}$} \;\big(\theta_{0,L''}(u'') - \theta_{0,L''}(u)\big),
\end{equation}
where we recall that $L' \ge L''$ $(\ge L_0)$ are defined in (\ref{1.7}). Note that
\begin{equation}
\begin{array}{lll}
\Delta ' & = &\mbox{\f $\dis\frac{1}{u' - u}$} \;\big(\IP[0 \uuu \; \infty]  - \IP[0 \u \infty]\big) =  \mbox{\f $\dis\frac{1}{u' - u}$} \; \IP [0 \stackrel{u}{\longleftrightarrow} \infty,  0 \uuu \; \infty],\label{1.25}
\\[2ex]
\wt{\Delta} ' & = &\mbox{\f $\dis\frac{1}{u' - u}$} \;\IP[0 \uu \; \partial B_{L'},  0 \uuu \;\partial B_{L'}], \label{1.26}
\end{array}
\end{equation}
and as we now explain

\begin{equation}\label{1.27}
\Delta' - \wt{\Delta} ' = \mbox{\f $\dis\frac{1}{u' - u}$} \big(\IP[0 \stackrel{u'}{\lla} \partial B_{L'}, \; 0 \uuu \; \infty] - \IP[0 \uu \; \partial B_{L'}, 0 \u \; \infty]\big) .
\end{equation}
Indeed, by (\ref{1.25}), (\ref{1.26}), one has
\begin{equation}\label{1.28}
\begin{array}{lll}
\Delta' - \wt{\Delta} ' &= &\; \mbox{\f $\dis\frac{1}{u' - u}$} \big(\IP[0 \uu\; \infty,  0 \uuu \infty] - \IP[0\uu \partial B_{L'}, \; 0 \uuu \; \partial B_{L'}]\big) =
\\
&&\; \mbox{\f $\dis\frac{1}{u' - u}$} \;\big(\IP [ 0 \uu  \partial B_{L'}, 0 \uuu \;  \infty] - \IP[0 \uu \partial B_{L'}, \; 0 \u  \; \infty, \;0 \uuu\; \infty] 
\\
&& -\; \IP [0 \uu \partial B_{L'}, 0 \uuu  \; \partial B_{L'}]\big) =  \mbox{\f $\dis\frac{1}{u' - u}$} \;\big(\IP [0 \uu   \partial B_{L'}, 0 \uuu \;  \partial B_{L'}] 
\\
&&+ \;\IP[0 \uu   \partial B_{L'},  0 \stackrel{u'}{\lla} \partial B_{L'},  0\uuu \;\infty]
\\
&& - \;\IP [0 \uu  \partial B_{L'}, 0 \u \;  \infty] - \IP [0 \stackrel{u}{\lla} \partial B_{L'}, 0 \uuu \;  \partial B_{L'}]) =
\\
&&\; \mbox{\f $\dis\frac{1}{u' - u}$} \;\big(\IP [ 0\stackrel{u'}{\longleftrightarrow}  \partial B_{L'},  0 \uuu \;\infty]- \IP[0 \stackrel{u}{\longleftrightarrow}   \partial B_{L'},  0 \u \;  \infty]\big),
\end{array} 
\end{equation}

\medskip\noindent
whence (\ref{1.27}). Clearly, one also has similar identities as in (\ref{1.25}) - (\ref{1.27}) for $\Delta ''$ and $\wt{\Delta} ''$.

We now proceed with the proof of (\ref{1.8}). By (\ref{1.27}), we have
\begin{equation}\label{1.29}
\begin{array}{lll}
| \Delta' - \wt{\Delta} ' | & \le& \mbox{\f $\dis\frac{1}{u'-u}$} \;\max \big\{\IP [0 \stackrel{u'}{\lla} \partial B_{L'},  0 \uuu  \; \infty], \;\IP[0 \stackrel{u}{\lla} \partial B_{L'},   0 \u \;  \infty]\big\}
\\[2ex]
&\! \stackrel{(\ref{0.2})}{\le}& \mbox{\f $\dis\frac{1}{u'-u}$} \;e^{-c_0 \,L'\,^{\gamma} }\stackrel{(\ref{1.7}), (\ref{1.4})}{=} u' - u,
\end{array} 
\end{equation}
and likewise we have
\begin{equation}\label{1.30}
|\Delta '' - \wt{\Delta} ''| \le u'' - u.
\end{equation}

\medskip\noindent
We will now compare $\wt{\Delta}'$ and $\wt{\Delta} ''$. We first recall that when $Z$ is a Poisson-distributed random variable with parameter $\lambda > 0$, then one has
\begin{equation}\label{1.31}
P[Z \ge 2] = 1 - e^{-\lambda} - \lambda \, e^{-\lambda} = \dis\int^\lambda_0 s \, e^{-s} \,ds \le \mbox{\f $\dis\frac{\lambda^2}{2}$}\;.
\end{equation}
If $N_{u,u'} (B_{L'})$ stands for the number of trajectories in the interlacements with labels in $(u,u']$ that reach $B_{L'}$ (this is a Poisson$((u' - u) \,{\rm cap}(B_{L'}))$-distributed random variable), we find by (\ref{1.26}) that
\begin{equation}\label{1.32}
\begin{array}{l}
\wt{\Delta} ' = \mbox{\f $\dis\frac{1}{u' - u}$}\;\big(\IP [0 \uu  \partial B_{L'}, 0 \uuu \;  \partial B_{L'}, \, N_{u,u'}(B_{L'}) = 1] \; + 
\\
\IP[0 \uu   \partial B_{L'},   0 \uuu  \; \partial B_{L'}, \,N_{u,u'} (B_{L'}) \ge 2 ]\big).
\end{array}
\end{equation}
If we consider an independent random walk $X_\point$ with initial distribution $\ov{e}_{B_{L'}}$, where $\ov{e}_{B_{L'}}$ stands for the normalized equilibrium measure of $B_{L'}$, and write \hbox{$\wh{\cV}^u = \cV^u \backslash ({\rm range} \; X)$,} we find from (\ref{1.32}), (\ref{1.31}) that
\begin{equation}\label{1.33}
\begin{array}{l}
\big|\wt{\Delta} ' - {\rm cap}(B_{L'}) \,e^{-(u' - u) \,{\rm cap} (B_{L'})}  P_{\ov{e}_{B_{L'}}} \otimes \IP [0 \uu   \partial B_{L'}, \; 0 \Vvu  \; \partial B_{L'}] \big| \le 
\\
\mbox{\f $\dis\frac{1}{2}$} \;(u' - u) \,{\rm cap}(B_{L'})^2
\end{array}
\end{equation}
(this formula is close in spirit to Theorem 1 of \cite{DebePopo15}). Then, we note that 
\begin{equation}\label{1.34}
\begin{array}{l}
{\rm cap} (B_{L'}) \, e^{-(u' -u) \,{\rm cap}(B_{L'})} P_{\ov{e}_{B_{L'}}} \otimes \IP [0 \uu \partial B_{L'},\; 0 \Vvu \;\partial B_{L'}] =
\\
\mbox{\f $\dis\frac{1}{u''- u}$} \; e^{(u'' - u') \,{\rm cap}(B_{L'})} \IP[N_{u,u''}(B_{L'}) = 1] \, P_{\ov{e}_{B_{L'}}} \otimes \IP [0 \stackrel{u}{\lla} \partial B_{L'}, 0 \Vvu\; \partial B_{L'}] = 
\\
\mbox{\f $\dis\frac{1}{u''- u}$} \; e^{(u'' - u') \,{\rm cap}(B_{L'})} \big(\IP [0 \uu \partial B_{L'}, 0 \stackrel{u''}{{\longleftrightarrow} \hspace{-2.3ex} \mbox{\f /}\;\;\;} \partial B_{L'}] 
\\
- \;\IP[0 \uu \partial B_{L'}, 0  \stackrel{u''}{{\longleftrightarrow} \hspace{-2.3ex} \mbox{\f /}\;\;\;} \partial B_{L'}, \, N_{u,u''}(B_{L'}) \ge 2]\big).
\end{array}
\end{equation}
Inserting this identity into (\ref{1.33}) and using (\ref{1.31}) once again, we find that
\begin{equation}\label{1.35}
\begin{array}{l}
\Big| \wt{\Delta} ' - \mbox{\f $\dis\frac{1}{u''- u}$} \; e^{(u'' - u') \,{\rm cap}(B_{L'})} \IP[0 \uu  \partial B_{L'}, 0 \stackrel{u''}{{\longleftrightarrow} \hspace{-2.3ex} \mbox{\f /}\;\;\;} \partial B_{L'}]\Big| \le
\\
\mbox{\f $\dis\frac{1}{2}$} \;(u' - u) \,{\rm cap}(B_{L'})^2 + \mbox{\f $\dis\frac{1}{2}$}  \;(u'' - u) \, {\rm cap}(B_{L'})^2 \; e^{(u'' - u') \,{\rm cap}(B_{L'})} \le 
\\[2ex]
(u'' - u) \, {\rm cap}(B_{L'})^2 \,  e^{(u'' - u') \,{\rm cap}(B_{L'})}.
\end{array}
\end{equation}
Note that $L'' \le L'$ and a similar calculation as (\ref{1.28}) yields the identity
\begin{equation}\label{1.36}
\begin{array}{l}
\mbox{\f $\dis\frac{1}{u''- u}$} \;\IP[0 \uu \partial B_{L'} , 0  \stackrel{u''}{{\longleftrightarrow} \hspace{-2.3ex} \mbox{\f /}\;\;\;} \partial B_{L'}] - \wt{\Delta}'' =
\\
\mbox{\f $\dis\frac{1}{u''- u}$} \;(\IP[0 \stackrel{u''}{\lla} \partial B_{L''}, 0 \stackrel{u''}{{\longleftrightarrow} \hspace{-2.3ex} \mbox{\f /}\;\;\;} \partial B_{L'}] - \IP [0 \uu \partial B_{L''}, \; 0 \u \;\partial B_{L'}]\big)
\end{array}
\end{equation}
($u''$ plays the role of $u'$, $L''$ the role of $L'$, and $L'$ the role of $\infty$ in (\ref{1.27})). The application of (\ref{0.2}) with $L''$ as in (\ref{1.7}) now yields
\begin{equation}\label{1.37}
\Big| \mbox{\f $\dis\frac{1}{u'' - u}$} \; \IP[0 \uu \partial B_{L'}, \; 0 \stackrel{u''}{{\longleftrightarrow} \hspace{-2.3ex} \mbox{\f /}\;\;\;} \partial B_{L'}] - \wt{\Delta} ''\Big| \le u'' - u.
\end{equation}
%
Coming back to (\ref{1.35}), we find that
\begin{equation}\label{1.38}
| \wt{\Delta}' - e^{(u'' - u') \,{\rm cap}(B_{L'})} \;\wt{\Delta} '' | \le (u'' - u) \big(1 + {\rm cap} (B_{L'})^2\big) \, e^{(u'' - u') \,{\rm cap}(B_{L'})}.
\end{equation}
Using (\ref{1.29}), (\ref{1.30}), it then follows that
\begin{equation}\label{1.39}
|\Delta ' - e^{(u'' - u') \,{\rm cap}(B_{L'})} \,\Delta ''| \le 3(u'' - u) \,\big(1 + {\rm cap} (B_{L'})^2\big)\, e^{(u'' - u') \,{\rm cap}(B_{L'})}.
\end{equation}
This completes the proof of (\ref{1.8}) and hence of Lemma \ref{lem1.3}. \hfill $\square$

With this last ingredient the proof of Proposition \ref{prop1.2} is now complete. \hfill $\square$

\section{The variational problem}
The main object of this section is to prove Theorem \ref{0.2} that provides a notion of minimizers for the variational problem (\ref{0.9}), see (\ref{0.13}) - (\ref{0.15}). At the end of the section, the Remark \ref{rem2.1} contains additional information on the variational problem, in particular when $D$, see (\ref{0.7}), is star-shaped or a ball.

{\it Proof of Theorem \ref{0.2}:} We will first prove (\ref{0.14}) and (\ref{0.15}). We consider $D, u, \nu$ as in (\ref{0.7}), (\ref{0.8}) and $\ov{J}\,^{\!D}_{u,\nu}$ defined in (\ref{0.12}). We let $\varphi_n \ge 0$ in $D^1(\IR^d)$, $n \ge 0$, stand for a minimizing sequence of (\ref{0.12}). Then, by Theorem 8.6, p.~208 and Corollary 9.7, p.~212 of \cite{LiebLoss01}, we can extract a subsequence still denoted by $\varphi_n$ and find $\varphi \ge 0$ in $D^1(\IR^d)$ such that $\frac{1}{2d} \int_{\IR^d} |\nabla \varphi|^2 dz \le \liminf_{n} \frac{1}{2d} \int_{\IR^d} | \nabla \varphi_n|^2 dz = \ov{J}\,^{\!D}_{u,\nu} $ and $\varphi_n \r \varphi$ a.e.~and in $L^2_{{\rm loc}}(\IR^d)$. Then, one has
\begin{equation}\label{2.1}
\begin{array}{lll}
\dis\int_D \hspace{-2.5ex}{-}  \;\ov{\theta}_0 \big((\sqrt{u} + \varphi)^2\big) \,dz  \ge &
\dis\int_D \hspace{-2.5ex}{-} \;\limsup\limits_n \,\ov{\theta}_0 \big((\sqrt{u} + \varphi_n)^2\big)\,dz  
\\
&\hspace{-6ex} \stackrel{\rm reverse \; Fatou}{\ge} \limsup\limits_n \,\dis\int_D \hspace{-2.5ex}{-} \; \ov{\theta}_0 \big((\sqrt{u} + \varphi_n)^2\big)\,dz \ge \nu.
\end{array}
\end{equation}
This shows that $\varphi$ is a minimizer for the variational problem in (\ref{0.12}) and (\ref{0.14}) is proved. If $\varphi$ is a minimizer for (\ref{0.12}), note that $\wt{\varphi} = \varphi \wedge (\sqrt{u}_* - \sqrt{u}) \in D^1(\IR^d)$, and using Theorem 6.17, p.~152 of \cite{LiebLoss01}, $\varphi - \wt{\varphi} = (\varphi - (\sqrt{u}_* - \sqrt{u}))_+$ and $\wt{\varphi}$ are orthogonal in $D^1(\IR^d)$. In addition, one has $\ov{\theta}_0 ((\sqrt{u} + \wt{\varphi})^2) = \ov{\theta}_0((\sqrt{u} + \varphi)^2)$ so that $\wt{\varphi}$ is a minimizer for (\ref{0.12}) as well. It follows that $\varphi = \wt{\varphi}$ (otherwise $\varphi$ would not be a minimizer). With analogous arguments, one sees that the infimum defining $\ov{J}\,^{\!D}_{u,\nu}$ in (\ref{0.12}) remains the same if one omits the condition $\varphi \ge 0$ in the right member of (\ref{0.12}). Then, using smooth perturbations in $\IR^d \backslash D$ of a minimizer $\varphi$ for (\ref{0.12}), one finds that $\varphi$ is harmonic outside $D$ and tends to $0$ at infinity (see Remark 5.10 1) of \cite{Szni19} for more details). In addition, see the same reference, $|z|^{d-2} \varphi(z)$ is bounded at infinity and hence everywhere since $\varphi$ is bounded. This completes the proof of (\ref{0.15}).

We now turn to the proof of (\ref{0.13}). As already stated above Theorem \ref{0.2}, we know by direct inspection that $I^D_{u,\nu} \ge  \ov{J}\,^{\!D}_{u,\nu}$. Thus, we only need to show that
\begin{equation}\label{2.2}
\ov{J}\,^{\!D}_{u,\nu} \ge I^D_{u,\nu}.
\end{equation}
To this end, we consider a minimizer $\varphi$ for $ \ov{J}\,^{\!D}_{u,\nu}$ and know that (\ref{0.15}) holds. As we now explain, if $\psi \ge 0$ belongs to $C^\infty_0(\IR^d)$ and $\psi > 0$ on $D$, then one has
\begin{equation}\label{2.3}
\dis\int_D \hspace{-2.5ex}{-}\; \theta_0 \big((\sqrt{u} + \varphi + \psi)^2\big)\,dz > \nu.
 \end{equation}
We consider two cases to argue (\ref{2.3}). Letting $m_D$ stand for the normalized Lebesgue measure on $D$, either
\begin{eqnarray} \label{2.4}
&&m_D(\varphi < \sqrt{u}_* - \sqrt{u}) = 0 \;\;\mbox{or}
\\[1ex]
&&m_D(\varphi < \sqrt{u}_* - \sqrt{u}) > 0 .\label{2.5}
\end{eqnarray}
In the first case (\ref{2.4}), then $\varphi \ge \sqrt{u}_* - \sqrt{u}$ a.e.~on $D$ so that the left member of (\ref{2.3}) equals $1$ and (\ref{2.3}) holds since $\nu < 1$ by (\ref{0.8}). In the second case (\ref{2.5}), since $\theta_0$ is strictly increasing on $[0,u_*)$ (cf.~Lemma \ref{lem1.1}), one has
\begin{equation}\label{2.6}
\begin{array}{l}
\dis\int_D  \; \theta_0 \big((\sqrt{u} + \varphi + \psi)^2\big)\,dz  = 
\\[2ex]
\dis\int_{D \cap \{\varphi < \sqrt{u}_* - \sqrt{u}\}} \theta_0  \big((\sqrt{u} + \varphi + \psi)^2\big)\,dz +
\\[2ex]
\dis\int_{D \cap \{\varphi \ge \sqrt{u}_* - \sqrt{u}\}} \theta_0 \big((\sqrt{u} + \varphi + \psi)^2\big)\,dz >
\\[2ex]
\dis\int_{D \cap \{\varphi < \sqrt{u}_* - \sqrt{u}\}} \theta_0 \big((\sqrt{u} + \varphi )^2\big) \,dz +
|D \cap \{\varphi \ge \sqrt{u}_* - \sqrt{u}\}| =
\\[3ex]
 \dis\int_D \ov{\theta}_0 \big((\sqrt{u} + \varphi )^2\big) \,dz \ge \nu \, |D|,
\end{array}
\end{equation}
and (\ref{2.3}) follows. We have thus proved (\ref{2.3}). Using multiplication by a smooth compactly supported $[0,1]$-valued function and convolution, we can construct a sequence $\varphi_n \ge 0$ in $C^\infty_0 (\IR^d)$, which approximates $\varphi + \psi$ in $D^1(\IR^d)$ and such that $\varphi_n$ converges to $\varphi + \psi$
 a.e.~on $D$. Then, we have
\begin{equation}\label{2.7}
\begin{array}{lll}
\nu \stackrel{(\ref{2.3})}{<}  
\dis\int_D \hspace{-2.5ex}{-} \; 
\theta_0 \big((\sqrt{u} + \varphi + \psi)^2\big)\,dz & \le &
\dis\int_D \hspace{-2.5ex}{-} \; 
 \liminf\limits_n \theta_0 \big((\sqrt{u} + \varphi_n)^2\big)\,dz 
\\
&\!\!\!\! \stackrel{\rm Fatou}{\le} &\,\liminf\limits_n \,
\dis\int_D \hspace{-2.5ex}{-} \; 
 \theta_0 \big((\sqrt{u} + \varphi_n)^2\big)\,dz.
\end{array}
\end{equation}

\pagebreak\noindent
Hence, for infinitely many $n$, one has $I^D_{u,\nu} \le \frac{1}{2d} \int |\nabla \varphi_n|^2\,dz$, so that
\begin{equation}\label{2.8}
I^D_{u,\nu} \le \mbox{\f $\dis\frac{1}{2d}$} \; \dis\int_{\IR^d} |\nabla(\varphi + \psi)|^2\,dz.
\end{equation}
If we now let $\psi$ tend to $0$ in $D^1(\IR^d)$ and recall that $ \frac{1}{2d} \int_{\IR^d} |\nabla \varphi |^2 \, dz = \ov{J}\,^{\!D}_{u,\nu}$, we find (\ref{2.2}). This completes the proof of Theorem \ref{0.2}. \hfill  $\square$
%
\begin{remark}\label{rem2.1}  \rm 1) Note that for $D$ as in (\ref{0.7}) and $0 < u < u_*$, the non-decreasing map
\begin{equation}\label{2.9}
\nu \in [\theta_0(u),1) \longrightarrow I^D_{u,\nu} \stackrel{\rm Theorem\; \ref{theo0.2}}{=} \,\ov{J}\,^{\!D}_{u,\nu}  \;\mbox{is continuous}.
\end{equation}
Indeed, by definition of $I^D_{u,\nu}$ in (\ref{0.9}), the map is right continuous. To see that the map is also left continuous, consider $\nu \in (\theta_0(u),1)$ and a sequence $\nu_n$ smaller than $\nu$ increasing to $\nu$. If $\varphi_n$ is a corresponding sequence of minimizers for (\ref{0.14}), by the same arguments as above (\ref{2.1}), we can extract a subsequence still denoted by $\varphi_n$ and find $\varphi \ge 0$ in $D^1(\IR^d)$ so that $\frac{1}{2d} \int_{\IR^d} |\nabla \varphi|^2 \,dz \le \liminf_{n} \int_{\IR^d} | \nabla \varphi_n|^2\,dz = \lim_n \ov{J}\,^{\!D}_{u,\nu_n}$ and $\varphi_n \r \varphi$ a.e. Using the reverse Fatou inequality as in (\ref{2.1}), we then have
\begin{equation}\label{2.10}
\begin{array}{lll}
\dis\int_D \hspace{-2.5ex}{-} \;  \,\ov{\theta}_0 \big((\sqrt{u} + \varphi)^2\big) \,dz & \ge &\dis\dis\int_D \hspace{-2.5ex}{-} \;  \limsup\limits_n \,\ov{\theta}_0  \big((\sqrt{u} + \varphi_n)^2\big) \,dz
\\
& \ge& \limsup\limits_n \dis\int_D \hspace{-2.5ex}{-} \;  \ov{\theta}_0 \big((\sqrt{u} + \varphi_n)^2\big) \,dz \ge \limsup\limits_n \nu_n = \nu.
\end{array}
\end{equation}
This shows that $\ov{J}\,^{\!D}_{u,\nu} \le \lim_n \ov{J}\,^{\!D}_{u,\nu_n}$ and completes the proof of (\ref{2.9}).

2) If $D$ in (\ref{0.7}) is star-shaped around $z_* \in D$ (that is, when $\lambda(z-z_*) + z_* \in D$ for all $z \in D$ and $0 \le \lambda \le 1$), then for $u, \nu$ as in (\ref{0.8}), one has the additional fact
\begin{eqnarray} \label{2.9a}
&&\mbox{any minimizer $\varphi$ in (\ref{0.14}) satisfies $\dis\int_D \hspace{-2.5ex}{-} \; \ov{\theta}_0 \big((\sqrt{u} + \varphi)^2\big)\,dz = \nu$, and}
\\
&& \ov{J}\,^{\!D}_{u,\nu} = \label{2.10a}
 \\
&&  \min \Big\{ \mbox{\f $\dis\frac{1}{2d}$} \dis\int_{\IR^d} | \nabla \varphi |^2 \, dz; \; \varphi \ge 0, \varphi \in D^1 (\IR^d), \;\dis\int_D \hspace{-2.5ex}{-} \; \ov{\theta}_0 \big((\sqrt{u} + \varphi)^2\big)\,dz = \nu\Big\}. \nonumber
  \end{eqnarray}
Indeed, if $\varphi$ is a minimizer of (\ref{0.14}), one sets for $0 < \lambda < 1$, $\varphi_\lambda(z) = \varphi (z_* + \frac{1}{\lambda} \;(z - z_*)$). Then, one has $\int_{\IR^d} |\nabla \varphi_\lambda |^2 \, dz = \lambda^{d-2} \int_{\IR^d} |\nabla \varphi |^2 \, dz$, and, with $D_\lambda \supseteq D$, the image of $D$ under the dilation with center $z_*$ and ratio ${\lambda}^{-1}$, one finds $\int_D \hspace{-2.5ex}{-} \; \ov{\theta}_0  ((\sqrt{u} + \varphi_\lambda)^2)\,dz = \int_{D_\lambda}\hspace{-3.5ex}{-} \;\;\ov{\theta}_0\; ((\sqrt{u} + \varphi)^2)\,dz \ge \lambda^d \;\int_D \hspace{-2.5ex}{-} \;\ov{\theta}_0 ((\sqrt{u} + \varphi)^2)\,dz$. Thus $\int_D \hspace{-2.5ex}{-} \;\;\ov{\theta}_0 ((\sqrt{u} + \varphi)^2)\, dz \ge \nu$ must actually equal $\nu$, otherwise the consideration of $\varphi_\lambda$ for $\lambda < 1$ close to $1$ would contradict the fact that $\varphi$ is a minimizer for (\ref{0.14}). This proves (\ref{2.9a}) and (\ref{2.10a}) readily follows.

Incidentally, note that due to (\ref{2.9a}), (\ref{2.10a}),
\begin{equation}\label{2.10b}
\mbox{the map in (\ref{2.9}) is strictly increasing}.
\end{equation}
Indeed, otherwise there would be $\nu < \nu'$ with $\ov{J}\,^{\!D}_{u,\nu} =  \ov{J}\,^{\!D}_{u,\nu'}$, and corresponding minimizers $\varphi, \varphi '$ as in (\ref{2.10a}). But then $\varphi'$ would contradict (\ref{2.9a}). The claim (\ref{2.10b}) thus follows.

3) If $D$ satisfying (\ref{0.7}) is a closed Euclidean ball of positive radius in $\IR^d$, given a minimizer $\varphi$ of (\ref{0.14}), we can consider its symmetric decreasing rearrangement $\varphi^*$ relative to the center of $D$, see Chapter 3 \S3 of \cite{LiebLoss01}. One knows that $\varphi^* \in D^1(\IR^d)$ and $\int_{\IR^d} |\nabla \varphi^*|^2\, dz \le \int_{\IR^d} |\nabla \varphi |^2\,dz$, see p.~188-189 of the same reference. As we now explain:
\begin{equation}\label{2.11}
\mbox{$\varphi^*$ is a minimizer of (\ref{0.14}) as well}.
\end{equation}

\n
The argument is a (small) variation on Remark 5.10 2) of \cite{Szni19}. With $m_D$ the normalized Lebesgue measure on $D$, one has $m_D(\varphi \ge s) \le m_D(\varphi^* \ge s)$ for all $s$ in $\IR$. Setting $\ov{\theta}^{-1}_0 (a) = \inf\{ t \ge 0$; $\ov{\theta}_0(t) \ge a\}$, for $0 \le a \le 1$, we see that for $0 \le t \le 1$, $\{\ov{\theta}_0((\sqrt{u} + \varphi)^2) \ge t\} = \{ \varphi \ge \sqrt{\ov{\theta}_0^{-1}(t)} - \sqrt{u}\}$, and a similar identity holds with $\varphi^*$ in place of $\varphi$. Hence, we have
\begin{equation}\label{2.12}
\begin{array}{lll}
\nu& \le &\dis\int_D \hspace{-2.5ex}{-} \; \ov{\theta}_0 \big((\sqrt{u} + \varphi)^2\big)\,dt = \dis\int^1_0 m_D \big(\ov{\theta}_0 \big((\sqrt{u} + \varphi)^2\big) \ge t\big)\,dt  
\\[1ex]
&=& \dis\int^1_0 m_D \big(\varphi \ge \sqrt{\ov{\theta}_0^{-1}(t)} - \sqrt{u}\big)\,dt \le \dis\int^1_0 m_D \big(\varphi^* \ge \sqrt{\ov{\theta}_0^{-1}(t)} - \sqrt{u}\big)\,dt
\\[1ex]
&= & \dis\int^1_0 m_D \big(\ov{\theta}_0 \big((\sqrt{u} + \varphi)^2\big) \ge t\big) dt 
=  \dis\int_D \hspace{-2.5ex}{-} \;
\ov{\theta}_0 \big((\sqrt{u} + \varphi^*)^2\big)\,dz \,.
\end{array}
\end{equation}

Thus, $\varphi^*$ is a minimizer of (\ref{0.14}) as well, and the claim (\ref{2.11}) follows. Incidentally, note that $D$ is clearly star-shaped so that (\ref{2.9}) and (\ref{2.10b}) hold. \hfill $\square$
\end{remark}

With Theorem \ref{0.2} we have a notion of minimizers for the variational problem corresponding to (\ref{0.9}). As mentioned in the Introduction, it is a natural question whether there is a strengthening of the asymptotics (\ref{0.10}): is it the case that
\begin{equation}\label{2.13}
\lim\limits_N \;\mbox{\f $\dis\frac{1}{N^{d-2}}$} \; \log \IP[| D_N \backslash \cC^u_\infty| \ge \nu \, |D_N|] = J^D_{u,\nu} \stackrel{\rm Theorem \ref{0.2}}{=} \ov{J}\,^{\!D}_{u,\nu}  \,?
\end{equation}
Given a minimizer $\varphi$ in (\ref{0.14}), the function $(\sqrt{u} + \varphi)^2 (\frac{\cdot}{N})$ can heuristically be interpreted as describing the slowly varying local levels of the tilted interlacements that enter the derivation of the lower bound (\ref{0.10}) for (\ref{2.13}), see Section 4 of \cite{Szni19}. Hence, the special interest in analyzing whether the minimizers $\varphi$ for (\ref{0.14}) reach the value $\sqrt{u}_* - \sqrt{u}$. Indeed, if $\varphi$ remains smaller than $\sqrt{u}_* - \sqrt{u}$ the local level function $(\sqrt{u} + \varphi)^2$ remains smaller than $u_*$, and so with values in the percolative regime of the vacant set of random interlacements. On the other hand, the presence of a region where $\varphi \ge \sqrt{u}_* - \sqrt{u}$ raises the question of the possible occurrence of droplets secluded from the infinite cluster of the vacant set that would take part in the creation of an excessive fraction $\nu$ of sites of $D_N$ outside the infinite cluster of $\cV^u$ (somewhat in the spirit of the Wulff droplet in the case Bernoulli percolation or for the Ising model, see \cite{Cerf00}, \cite{Bodi99}).

\section{An application of the $C^1$-property of $\theta_0$ to the variational problem}

The main object of this section is to prove Theorem \ref{theo0.3} of the Introduction that describes a regime {\it of small excess} $\nu$ for which all minimizers of the variational problem (\ref{0.14}) remain strictly below the threshold value $\sqrt{u}_* - \sqrt{u}$. At the end of the section, the Remark \ref{rem3.4} contains some simple observations concerning the existence of minimizers reaching the threshold value $\sqrt{u}_* - \sqrt{u}$.

We consider $D$ as in (\ref{0.7}), and as in (\ref{0.16})
\begin{equation}\label{3.1}
\mbox{$u_0 \in (0,u_*)$ such that $\theta_0$ is $C^1$ on a neighborhood of $[0,u_0]$}.
\end{equation}
To prove Theorem \ref{theo0.3}, we will replace $\theta_0$ by a suitable $C^1$-function $\wt{\theta}$, which agrees with $\theta_0$ on $[0,u_0]$, see Lemma \ref{lem3.1}, and show that for $0 < u < u_0$ and $\nu \ge \theta_0(u)$ the variational problem $\wt{J}\,^{\!D}_{u,\nu}$ attached to $\wt{\theta}$, see (\ref{3.15}) and Lemma \ref{lem3.3}, has minimizers that satisfy an Euler-Lagrange equation, see (\ref{3.19}), involving a Lagrange multiplier that can be bounded from above and below in terms of $\nu - \theta_0(u)$, see (\ref{3.20}). Using such tools, we will derive properties such as stated in (\ref{0.17}) for the minimizers of $\wt{J}\,^{\!D}_{u,\nu}$ and show that they coincide with the minimizers of the original problem $\ov{J}\,^{\!D}_{u,\nu}$ in (\ref{0.14}) when $0 < u < u_0$ and $\nu$ is close to $\theta_0(u)$, see below (\ref{3.28}).

{\it Proof of Theorem \ref{theo0.3}:} 

Recall $u_0$ as in (\ref{3.1}). Our fist step is

\bigskip
\begin{lemma}\label{lem3.1}
There exist non-negative functions $\wt{\theta}$ and $\wt{\gamma}$ on $\IR_+$ such that 
\begin{eqnarray}
&&\theta_0 = \wt{\theta}- \wt{\gamma}, \label{3.2}
\\[1ex]
&&\mbox{the function $\wt{\eta}(b) = \wt{\theta}(b^2)$ is $C^1$ on $\IR$}, \label{3.3}
\\[1ex]
&&\mbox{$\wt{\eta}\, '$ is bounded and uniformly continuous on $\IR$},\label{3.4}
\\[1ex]
&&\mbox{$\wt{\eta}\, '$ is uniformly positive on each interval $[a, + \infty), a > 0$,}\label{3.5}
\\[1ex]
&&\mbox{$\wt{\gamma} = 0$ on $[0,u_0]$ and $\wt{\gamma} > 0$ on $(u_0,\infty)$}. \label{3.6}
\end{eqnarray}
\end{lemma}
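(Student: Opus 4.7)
The plan is to build $\wt{\theta}$ piecewise: it will agree with $\theta_0$ on $[0,u_0]$, be lifted above $\theta_0$ by a second-order bump on a small interval to the right of $u_0$, and then be continued by an explicit concave profile whose derivative decays like $1/\sqrt{u}$, so that $\wt{\eta}'$ stays bounded. Using the hypothesis (\ref{3.1}), I first fix $\delta > 0$ such that $\theta_0 \in C^1([0,u_0+\delta])$, and a constant $C>0$ so large that $C(\delta/2)^2 > 1$. I then set
\[
\wt{\theta}(u) = \left\{\begin{array}{ll}\theta_0(u), & u \in [0, u_0], \\[1ex] \theta_0(u) + C(u-u_0)^2, & u \in (u_0, u_0+\delta/2], \\[1ex] \wt{\theta}(u_0+\delta/2) + 2M\bigl(\sqrt{u-u_0-\delta/2+1}-1\bigr), & u > u_0+\delta/2,\end{array}\right.
\]
with $M := \theta_0'(u_0+\delta/2) + C\delta$, and I define $\wt{\gamma} := \wt{\theta}-\theta_0$. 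The bump $C(u-u_0)^2$ vanishes together with its derivative at $u_0$, and the square-root profile is tuned so that its right-derivative at $u_0+\delta/2$ equals $M$, which matches the left-derivative coming from the middle piece. Hence $\wt{\theta}$ is $C^1$ on $[0,\infty)$.

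The verification of (\ref{3.2})--(\ref{3.6}) is then routine. Since $\wt{\eta}(b)=\wt{\theta}(b^2)$ is even, one has $\wt{\eta}'(b) = 2b\,\wt{\theta}'(b^2)$ with $\wt{\eta}'(0)=0$, which gives (\ref{3.3}) once one recalls that $\theta_0'(0)$ is finite by Theorem \ref{theo0.1}. On the outer piece $\wt{\theta}'(t) = M/\sqrt{t-u_0-\delta/2+1}$, so $\wt{\eta}'(b) = 2bM/\sqrt{b^2-u_0-\delta/2+1}$ has finite limits $\pm 2M$ at $\pm\infty$; combined with continuity on the compact middle region this yields both boundedness and uniform continuity of $\wt{\eta}'$ on $\IR$, i.e.\ (\ref{3.4}). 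Uniform positivity on each $[a,\infty)$, which is (\ref{3.5}), follows from the strict positivity of $\theta_0'$ on $[0,u_0]$ (Theorem \ref{theo0.1}), the explicit positivity of $\wt{\theta}'$ on the other two pieces, and the positive limit $2M$ at $+\infty$. Finally, (\ref{3.6}) comes out because $\wt{\gamma} = C(\cdot-u_0)^2 > 0$ on $(u_0, u_0+\delta/2]$, while on $(u_0+\delta/2, \infty)$ one has $\wt{\theta}(u) \geq \wt{\theta}(u_0+\delta/2) \geq C(\delta/2)^2 > 1 \geq \theta_0(u)$.

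The real obstacle is the strict inequality $\wt{\gamma} > 0$ on $(u_0, \infty)$. The $C^1$-matching at $u_0$ forces $\wt{\theta}(u_0)=\theta_0(u_0)$ and $\wt{\theta}'(u_0)=\theta_0'(u_0)$, so the gap can only open at second order; since $\theta_0$ is not assumed to be $C^2$, the second-order bump $C(u-u_0)^2$ is precisely what manufactures a strictly positive gap immediately to the right of $u_0$. The largeness condition $C(\delta/2)^2>1$ then propagates that gap past any possible upward jump of $\theta_0$ at $u_*$, all the way to infinity, via the crude bound $\theta_0 \leq 1$.
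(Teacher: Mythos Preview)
Your construction is correct and follows essentially the same three-piece strategy as the paper: agree with $\theta_0$ on $[0,u_0]$, add a quadratic bump $C(u-u_0)^2$ immediately to the right, and continue with a $\sqrt{\,\cdot\,}$-type profile so that $\wt{\eta}'$ stays bounded with a positive limit at infinity; the paper uses $\wt{\theta}(v)=\sqrt{v}$ for large $v$ with a free $C^1$ interpolation in between, whereas you glue a shifted square root directly, which is a harmless variant. One small correction: the strict positivity of $\theta_0'$ on $[0,u_0+\delta]$ should be attributed to Lemma~\ref{lem1.1} (which applies under the hypothesis (\ref{3.1}) alone) rather than to Theorem~\ref{theo0.1}, since the latter concerns the interval $[0,\wh{u})$ and $u_0<\wh{u}$ is not part of the assumption here.
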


\begin{proof}
By assumption there is $u_1 \in (u_0, u_*)$ such that $\theta_0$ is $C^1$ on a neighborhood of $[0,u_1]$ with a uniformly positive derivative on $[0,u_1]$ by Lemma \ref{lem1.1}. We set $u_2 = \max \{u_*, 4\}$, so that $u_0 < u_1 < u_2$. We then define $\wt{\theta}(v) = \theta_0(v)$ on $[0,u_0]$, $\wt{\theta}(v) = \theta_0(v) + a(v - u_0)^2$ on $[u_0,u_1]$, where $a > 0$ is chosen so that $\wt{\theta}(u_1) = 1$ ($\ge \theta_0 (u_*) > \theta_0(u_1)$), and $\wt{\theta}(v) = \sqrt{v} \;(\ge 2)$ on $[u_2,\infty)$. In particular, $\wt{\eta}(b) = b$ for $b \ge \sqrt{u}_2$. Then, any choice of $\wt{\theta}$ on $[u_1,u_2]$ that is $C^1$ on $[u_1,u_2]$ with right derivative $\theta'_0(u_1)$ at $u_1$, left derivative $\frac{1}{2 \sqrt{u}_2}$ at $u_2$, and uniformly positive derivative on $[u_1,u_2]$, leads to functions $\wt{\theta}, \wt{\gamma}$ that satisfy (\ref{3.2}) - (\ref{3.6}).
\end{proof}

\pagebreak
We select functions fulfilling (\ref{3.2}) - (\ref{3.6}) and from now on we view 
\begin{equation}\label{3.7}
\mbox{$\wt{\theta}$ (and hence $\wt{\gamma}$) as fixed and solely depending on $u_0$}.
\end{equation}
For the results below up until the end of the proof of Theorem \ref{theo0.3}, the only property of $u_0$ that matters is that $u_0$ is positive and a decomposition of $\theta_0$ satisfying (\ref{3.2}) - (\ref{3.6}) has been selected. In particular, if such a decomposition can be achieved in the case of $u_0 = u_*$, the results that follow until the end of the proof of Theorem \ref{theo0.3}, with the exception of the last inequality (\ref{0.17}) (part of the claim at the end of the proof), remain valid. This observation will be useful in Remark \ref{rem3.4} at the end of this section.

With $u \in (0,u_0)$, $D$ as in (\ref{0.7}), and $\wt{\eta}$ as in (\ref{3.3}), we now introduce the map:
\begin{equation}\label{3.8}
\wt{A} : \varphi \in D^1(\IR^d) \r \wt{A}(\varphi) = \dis\int_D \hspace{-2.5ex}{-}\;
\,\wt{\eta} (\sqrt{u} + \varphi) \,dz \in \IR.
\end{equation}
We collect some properties of $\wt{A}$ in the next

\begin{lemma}\label{lem3.2}
\begin{eqnarray}
&&|\wt{A}(\varphi + \psi) - \wt{A} (\varphi)| \le c(u_0) \,\|\psi \|_{L^1(m_D)}, \; \mbox{for} \; \varphi, \psi \in D^1(\IR^d)\label{3.9}
\\
&&\mbox{(recall $m_D$ stands for the normalized Lebesgue measure on $D$)}. \nonumber
\\[3ex]
&&\mbox{$\wt{A}$ is a $C^1$-map and $A'(\varphi)$, its differential at $\varphi \in D^1(\IR^d)$, is the}\label{3.10}
\\
&&\mbox{ linear form} \;\psi \in D^1(\IR^d) \r \dis\int_D \hspace{-2.5ex}{-}\; \wt{\eta} \,' (\sqrt{u} +\varphi) \,\psi \,dz = A'(\varphi) \,\psi . \nonumber
\\[3ex]
&&\mbox{For any $\varphi \ge 0$, $A'(\varphi)$ is non-degenerate}. \label{3.11}
\end{eqnarray}
\end{lemma}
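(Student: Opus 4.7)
The plan is to handle the three items in order, exploiting the three ingredients provided by Lemma \ref{lem3.1}: the boundedness of $\wt\eta\,'$ (which makes $\wt\eta$ globally Lipschitz), its uniform continuity (needed for the error estimate at the level of Fr\'echet differentiability), and its strict positivity on $[\sqrt u,\infty)$ (which gives non-degeneracy). The continuous embedding $D^1(\IR^d)\hookrightarrow L^{2^*}(\IR^d)$, together with boundedness of $D$, will translate control on the Dirichlet norm into control in $L^p(m_D)$ for $1\le p\le 2^*=2d/(d-2)$.

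For (\ref{3.9}), the bound $|\wt\eta\,'|\le M(u_0)$ furnished by (\ref{3.4}) immediately yields that $\wt\eta$ is $M$-Lipschitz on $\IR$, so
\[
|\wt A(\varphi+\psi)-\wt A(\varphi)|\le \dis\int_D \hspace{-2.5ex}{-}\;|\wt\eta(\sqrt u+\varphi+\psi)-\wt\eta(\sqrt u+\varphi)|\,dz\le M\|\psi\|_{L^1(m_D)}.
\]

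For (\ref{3.10}), I would expand with the fundamental theorem of calculus and write the error of the proposed differential as
\[
\wt A(\varphi+\psi)-\wt A(\varphi)-A'(\varphi)\psi=\dis\int_D \hspace{-2.5ex}{-}\;\dis\int_0^1\bigl[\wt\eta\,'(\sqrt u+\varphi+t\psi)-\wt\eta\,'(\sqrt u+\varphi)\bigr]\psi\,dt\,dz .
\]
Given $\ve>0$, uniform continuity of $\wt\eta\,'$ (property (\ref{3.4})) provides $\delta>0$ with $|\wt\eta\,'(a+r)-\wt\eta\,'(a)|\le\ve$ whenever $|r|\le\delta$. Splitting the $z$-integration along $\{|\psi|\le\delta\}$ and its complement, and using $|\psi|\le|\psi|^2/\delta$ on the latter, leads to
\[
|\wt A(\varphi+\psi)-\wt A(\varphi)-A'(\varphi)\psi|\le \ve\,\|\psi\|_{L^1(m_D)}+(2M/\delta)\,\|\psi\|_{L^2(m_D)}^2 .
\]
By Sobolev and H\"older on the bounded set $D$, both $\|\psi\|_{L^1(m_D)}$ and $\|\psi\|_{L^2(m_D)}$ are bounded by $c(D)\|\nabla\psi\|_{L^2(\IR^d)}$; dividing through by $\|\nabla\psi\|_{L^2}$ and letting $\|\nabla\psi\|_{L^2}\to 0$ gives, since $\ve$ is arbitrary, Fr\'echet differentiability with the claimed derivative. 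Continuity of $\varphi\mapsto A'(\varphi)$ follows by the same ingredients: if $\varphi_n\to\varphi$ in $D^1(\IR^d)$, then $\varphi_n\to\varphi$ in $L^2(m_D)$ and a.e.\ along a subsequence, whence by dominated convergence (domination $2M$) $\wt\eta\,'(\sqrt u+\varphi_n)\to\wt\eta\,'(\sqrt u+\varphi)$ in $L^2(m_D)$, so that by Cauchy-Schwarz the operator norm of $A'(\varphi_n)-A'(\varphi)$ on $D^1(\IR^d)$ tends to $0$; a subsequence argument then removes the subsequence extraction. The main subtlety here is the uniform continuity step — plain continuity of $\wt\eta\,'$ would not suffice because the pointwise differences $\wt\eta\,'(\sqrt u+\varphi+t\psi)-\wt\eta\,'(\sqrt u+\varphi)$ need to be made small simultaneously at almost every $z\in D$, and this is precisely what (\ref{3.4}) delivers.

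For (\ref{3.11}), since $u>0$ and $\varphi\ge 0$ we have $\sqrt u+\varphi\ge\sqrt u>0$, so (\ref{3.5}) gives a constant $\kappa(u)>0$ with $\wt\eta\,'(\sqrt u+\varphi)\ge \kappa$ a.e.\ on $D$. Picking any non-negative $\psi\in C^\infty_0(\IR^d)\subseteq D^1(\IR^d)$ with $\psi>0$ on an open set intersecting $D$ (which is possible since $D$ has non-empty interior by (\ref{0.7})), we obtain $A'(\varphi)\psi\ge\kappa\int_D\hspace{-2.5ex}{-}\;\psi\,dz>0$, so $A'(\varphi)\ne 0$, proving non-degeneracy.
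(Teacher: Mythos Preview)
Your proof is correct and follows essentially the same route as the paper: the Lipschitz bound for (\ref{3.9}), the fundamental theorem of calculus plus the split $\{|\psi|\le\delta\}\cup\{|\psi|>\delta\}$ controlled via uniform continuity and the bound $|\psi|\le|\psi|^2/\delta$ for (\ref{3.10}), and the strict positivity (\ref{3.5}) combined with $u>0$ for (\ref{3.11}). The only minor deviation is in the continuity of $\varphi\mapsto A'(\varphi)$: the paper reuses the same quantitative estimate (yielding a bound of the form $\delta\|\psi\|_{L^1(m_D)}+(2/\rho)\|\wt\eta\,'\|_\infty\|\gamma\|_{L^2(m_D)}\|\psi\|_{L^2(m_D)}$ directly), whereas you argue via bounded convergence along a.e.-convergent subsequences and a subsequence-of-subsequence trick; both are valid, the paper's version being slightly more direct and quantitative.
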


\begin{proof}
The claim (\ref{3.9}) is an immediate consequence of the Lipschitz property of $\wt{\eta}$ resulting from (\ref{3.4}). We then turn to the proof of (\ref{3.10}). For $\varphi, \psi$ in $D^1(\IR^d)$, we set
\begin{equation}\label{3.12}
\begin{array}{lll}
\Gamma& =  &\; \wt{A}\,(\varphi + \psi) - \wt{A}(\varphi) - \dis\int_D \hspace{-2.5ex}{-}\; \wt{\eta}\, ' (\sqrt{u} + \varphi) \, \psi \, dz = 
\\
&&\; \dis\int^1_0 ds\dis\int_D \hspace{-2.5ex}{-}\; \big(\wt{\eta}\,' (\sqrt{u} + \varphi + s\, \psi) - \wt{\eta}\,'(\sqrt{u} + \varphi)\big)\,\psi\, dz.
\end{array}
\end{equation}
With the help of the uniform continuity and boundedness of $\wt{\eta}\, '$, see (\ref{3.4}), for any $\delta > 0$ there is a $\rho > 0$ such that for any $\varphi, \psi$ in $D^1(\IR^d)$
\begin{equation}\label{3.13}
\begin{array}{lll}
|\Gamma | & \le &\dis\int_D \hspace{-2.5ex}{-}\; (\delta + 2 \| \wt{\eta} \,'\|_\infty 1\{| \psi | \ge \rho\}) \,|\psi | \, dz
\\[2ex]
& \le &\delta \| \psi \|_{L^1(m_D)}  +  \mbox{\f $\dis\frac{2}{\rho}$} \;\|\wt{\eta} \,'\|_\infty \, \| \psi\|^2_{L^2(m_D)}.
\end{array}
\end{equation}

\medskip
Since the $D^1(\IR^d)$-norm controls the $L^2(m_D)$-norm, see Theorem 8.3, p.~202 of \cite{LiebLoss01}, we see that for any $\varphi \in D^1(\IR^d)$, $\Gamma = o\, (\|\psi\|_{D^1(\IR^d)})$, as $\psi \r 0$ in $D^1(\IR^d)$. Hence, $\wt{A}$ is differentiable with differential given in the second line of (\ref{3.10}). In addition, with $\delta > 0$ and $\rho > 0$ as above, for any $\varphi, \gamma, \psi$ in $D^1(\IR^d)$
\begin{equation}\label{3.14}
\begin{array}{l}
\Big| \dis\int_D \hspace{-2.5ex}{-}\;
(\wt{\eta} \,' \big(\sqrt{u} + \varphi + \gamma) - \wt{\eta} \,' (\sqrt{u} + \varphi)\big) \,\psi \, dz \Big| \le 
\\[2ex]
\dis\int_D \hspace{-2.5ex}{-} \;(\delta + 2 \| \wt{\eta}\, '\|_\infty \,1\{| \gamma | \ge \rho\}) \,|\psi | \,dz
\\[1ex]
\le \delta \|\psi \|_{L^1(m_D)} +  \mbox{\f $\dis\frac{2}{\rho}$} \;\| \wt{\eta}\, '\|_\infty \, \|\gamma \|_{L^2(m_D)} \,\|\psi\|_{L^2(m_D)}.
\end{array}
\end{equation}
This readily implies that $\wt{A}$ is $C^1$ and completes the proof of (\ref{3.10}). Finally, (\ref{3.11}) follows from (\ref{3.5}) and the fact that $u > 0$. This completes the proof of Lemma \ref{lem3.2}.
\end{proof}

Recall that $u \in (0,u_0)$. We now define the auxiliary variational problem
\begin{equation}\label{3.15}
\begin{array}{l}
\wt{J}\,^{\!D}_{u,\nu}  = \min \Big\{ \mbox{\f $\dis\frac{1}{2d}$} \;\dis\int_{\IR^d} |\nabla \varphi |^2 dz; \, \varphi \ge 0, \varphi \in D^1(\IR^d), \wt{A}(\varphi) \ge \nu\Big\}, 
\\[2ex]
\mbox{for $\nu \ge \wt{\theta} (u)$ ($\stackrel{(\ref{3.6})}{=} \theta_0(u)$\,)}.
\end{array}
\end{equation}
In the next lemma we collect some useful facts about this auxiliary variational problem and its minimizers. We denote by $G$ the convolution with the Green function of $\frac{1}{2d}\, \Delta$ (i.e. $\frac{d}{2 \pi^{d/2}} \,\Gamma(\frac{d}{2} - 1) \, | \cdot |^{-(d-2)}$ with $| \cdot |$ the Euclidean norm on $\IR^d$).

\begin{lemma}\label{lem3.3}
For $D$ as in (\ref{0.7}), $u \in (0,u_0)$, $\nu \ge \wt{\theta}(u)$ ($= \theta_0(u)$), one has
\begin{equation}\label{3.16}
\wt{J}\,^{\!D}_{u,\nu} = \min \Big\{ \mbox{\f $\dis\frac{1}{2d}$} \;\dis\int_{\IR^d}  |\nabla \varphi |^2 dz; \, \varphi \ge 0, \varphi \in D^1(\IR^d), \wt{A}(\varphi) =\nu\Big\}.
\end{equation}
Moreover, one can omit the condition $\varphi \ge 0$ without changing the above value, and
\begin{equation}\label{3.17}
\mbox{any minimizer of (\ref{3.15}) satisfies $\wt{A} (\varphi) = \nu$}.
\end{equation}
In addition, when $\nu = \wt{\theta}(u)$, $\wt{\varphi} = 0$ is the only minimizer of (\ref{3.15}) and when $\nu > \theta_0(u)$, for any minimizer $\wt{\varphi}$ of (\ref{3.15})
\begin{equation}\label{3.18}
\begin{array}{l}
\mbox{$\wt{\varphi} \,(\ge 0)$ is $C^{1,\alpha}$ for all $\alpha \in (0,1)$, harmonic outside $D$, with}
\\
\mbox{$\sup\limits_z |z|^{d-2} \varphi(z) < \infty$},
\end{array}
\end{equation}
and there exists a Lagrange multiplier $\wt{\lambda} > 0$ such that
\begin{eqnarray}
&& \wt{\varphi} = \wt{\lambda} \,G( \wt{\eta}\,' (\sqrt{u} + \wt{\varphi}) \,1_D), \;\mbox{with} \label{3.19}
\\[1ex]
&&c'(u_0,D) \,\big(\nu - \theta_0(u)\big) \le \wt{\lambda} \le c(u,u_0,D) \,\big(\nu - \theta_0(u)\big)\label{3.20}
\end{eqnarray}
(recall that $\theta_0(u) = \wt{\theta}(u)$). 
\end{lemma}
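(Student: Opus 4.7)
The plan is to combine the direct method of the calculus of variations with the Lagrange multiplier framework made available by the $C^1$-structure of $\wt{A}$ established in Lemma \ref{lem3.2}. Existence of a minimizer for (\ref{3.15}) is obtained exactly as in the proof of Theorem \ref{theo0.2}: from a minimizing sequence $\varphi_n \ge 0$ in $D^1(\IR^d)$ I extract a subsequence converging weakly in $D^1(\IR^d)$ and a.e.\ to some $\varphi \ge 0$, invoke the weak lower semicontinuity of the Dirichlet energy, and use the Lipschitz estimate (\ref{3.9}) together with $L^2_{\rm loc}$-convergence to pass to the limit in the constraint. To remove the sign condition, given $\varphi \in D^1(\IR^d)$ with $\wt{A}(\varphi) \ge \nu$ I would form the Lipschitz replacement $\wt{\varphi} = \max(|\sqrt{u} + \varphi| - \sqrt{u},\, 0)$, which is non-negative, lies in $D^1(\IR^d)$, satisfies $|\nabla \wt{\varphi}| \le |\nabla \varphi|$ a.e., and has $\sqrt{u} + \wt{\varphi} \ge |\sqrt{u} + \varphi|$; the evenness of $\wt{\eta}$ (from (\ref{3.3})) together with the monotonicity of $\wt{\eta}$ on $[0,\infty)$ (a consequence of (\ref{3.5})) then yields $\wt{A}(\wt{\varphi}) \ge \wt{A}(\varphi) \ge \nu$, so the infimum of (\ref{3.15}) is unchanged if one drops the sign condition.

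For the equality (\ref{3.17}), I take a minimizer $\varphi$ of the sign-free version and assume $\wt{A}(\varphi) > \nu$. By (\ref{3.9}) the perturbation $\varphi + t\psi$, for $\psi \in C^\infty_0(\IR^d)$, satisfies the constraint for $|t|$ small, so first-order minimality of $\int_{\IR^d}|\nabla(\varphi + t\psi)|^2\,dz$ at $t = 0$ forces $\int_{\IR^d} \nabla \varphi \cdot \nabla \psi\, dz = 0$ for every such $\psi$. Hence $\varphi$ is harmonic in $\IR^d$, and since it lies in $D^1(\IR^d)$ and thus decays at infinity, Liouville gives $\varphi \equiv 0$; but then $\wt{A}(\varphi) = \wt{\theta}(u) = \theta_0(u) \le \nu$, contradicting $\wt{A}(\varphi) > \nu$. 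In the boundary case $\nu = \theta_0(u)$ the same computation shows that $\varphi \equiv 0$ is actually the unique minimizer. The identity (\ref{3.16}) is an immediate corollary.

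When $\nu > \theta_0(u)$, non-degeneracy of $\wt{A}\,'(\varphi)$ in (\ref{3.11}) makes the equality constraint a smooth hypersurface at $\varphi$, so the standard Lagrange multiplier theorem in Banach spaces yields $\mu \in \IR$ with $\frac{1}{d}\int_{\IR^d}\nabla\varphi \cdot \nabla \psi\, dz = \frac{\mu}{|D|}\int_D \wt{\eta}\,'(\sqrt{u}+\varphi)\,\psi\, dz$ for every $\psi \in D^1(\IR^d)$. Setting $\wt{\lambda} = \mu/(2|D|)$ and inverting the resulting Poisson equation $-\frac{1}{2d}\Delta\varphi = \wt{\lambda}\,\wt{\eta}\,'(\sqrt{u}+\varphi)\, 1_D$ by the Green function $G$ produces (\ref{3.19}); since $\varphi \not\equiv 0$ and $\wt{\eta}\,'(\sqrt{u}+\varphi) > 0$ on $D$ by (\ref{3.5}), the representation forces $\wt{\lambda} > 0$. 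For (\ref{3.18}), the right-hand side $\wt{\lambda}\,\wt{\eta}\,'(\sqrt{u}+\varphi)\,1_D$ of the Poisson equation is bounded and compactly supported by (\ref{3.4}), so interior $W^{2,p}$-estimates for all $p<\infty$ and Sobolev embedding give $\varphi \in C^{1,\alpha}_{\rm loc}(\IR^d)$ for every $\alpha \in (0,1)$, while harmonicity of $\varphi$ outside $D$ and the $|z|^{-(d-2)}$ decay are built into the representation (\ref{3.19}).

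Finally, for (\ref{3.20}) I would start from the identity
\[
\nu - \theta_0(u) \;=\; \wt{A}(\varphi) - \wt{A}(0) \;=\; \int_0^1 \!\! \int_D \hspace{-2.5ex}{-}\;\; \wt{\eta}\,'(\sqrt{u} + t\varphi)\,\varphi\, dz\, dt.
\]
For the lower bound on $\wt{\lambda}$, I use $\wt{\eta}\,' \le \|\wt{\eta}\,'\|_\infty$ from (\ref{3.4}) to control the right-hand side above by $\|\wt{\eta}\,'\|_\infty \int_D \hspace{-2.5ex}{-}\; \varphi\, dz$, and then substitute the representation (\ref{3.19}) once more to bound $\int_D \varphi(z)\,dz \le \wt{\lambda}\,\|\wt{\eta}\,'\|_\infty \iint_{D\times D} G(z-z')\,dz\,dz'$, yielding $\wt{\lambda} \ge c'(u_0,D)\,(\nu-\theta_0(u))$. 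For the upper bound, I invoke (\ref{3.5}) to get a positive lower bound $\delta(u)>0$ on $\wt{\eta}\,'(\sqrt{u}+t\varphi)$ valid for all $t\in[0,1]$ (since $\sqrt{u} + t\varphi \ge \sqrt{u} > 0$), whence $\nu - \theta_0(u) \ge \delta(u)\int_D \hspace{-2.5ex}{-}\; \varphi\, dz$; combined with the pointwise bound $\varphi(z) \ge \wt{\lambda}\,\delta(u)\int_D G(z-z')\,dz'$ on $D$ coming from (\ref{3.19}) this gives $\wt{\lambda} \le c(u,u_0,D)(\nu-\theta_0(u))$. The main obstacle I anticipate is the clean deployment of Liouville in the step for (\ref{3.17}) and the careful accounting of constants in the Lagrange calculation, so that the final constants depend on $u$ only through the positivity parameter $\delta(u)$ in (\ref{3.5}); the rest is routine elliptic machinery once the $C^1$-structure of $\wt{A}$ is in hand.
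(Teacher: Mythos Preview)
Your proof is correct and follows essentially the same route as the paper: direct method for existence, Lagrange multipliers via (\ref{3.10})--(\ref{3.11}) for the Euler--Lagrange equation (\ref{3.19}), and the same two-sided estimate on $\int_D\hspace{-2.5ex}- \;\wt{\varphi}\,dz$ for (\ref{3.20}). Two places where the paper is simpler: for dropping the sign condition it replaces $\varphi$ by $|\varphi|$ (since $\sqrt{u}+|\varphi|\ge|\sqrt{u}+\varphi|$ and $\wt{\eta}$ is even and increasing on $[0,\infty)$, one gets $\wt{A}(|\varphi|)\ge\wt{A}(\varphi)$ directly), and for (\ref{3.17}) it uses the scaling $\lambda\varphi$ with $\lambda\in(0,1)$ close to $1$---if $\wt{A}(\varphi)>\nu$ then $\wt{A}(\lambda\varphi)\ge\nu$ by continuity while $\cD(\lambda\varphi)=\lambda^2\cD(\varphi)<\cD(\varphi)$---which works directly on the constrained problem and avoids your detour through the sign-free version and Liouville.
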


\begin{proof}
We begin by the proof of (\ref{3.16}), (\ref{3.17}). For $\varphi \in D^1(\IR^d)$, we write $\cD(\varphi)$ as a shorthand for $\frac{1}{2d} \int_{\IR^d} | \nabla \varphi |^2 dz$. Note that $\lim_{b \r \infty} \wt{\eta}(b) = \infty$ by (\ref{3.5}), so that the set in the right member of (\ref{3.15}) is not empty. Taking a minimizing sequence $\varphi_n$ in (\ref{3.15}), we can extract a subsequence still denoted by $\varphi_n$ and find $\varphi \in D^1(\IR^d)$ such that $\cD(\varphi) \le \liminf_n \cD(\varphi_n)$ and $\varphi_n \r \varphi$ in $L^1(m_D)$ (see Theorem 8.6, p.~208 of \cite{LiebLoss01}). By (\ref{3.9}) of Lemma \ref{lem3.2}, we find that $\wt{A}(\varphi) \ge \nu$. Hence, $\varphi$ is a minimizer of (\ref{3.15}).

Now, for any minimizer $\varphi$ of (\ref{3.15}), if $\wt{A}(\varphi) > \nu$, then for some $\lambda \in (0,1)$ close to $1$, $\wt{A} (\lambda \varphi) \ge \nu$. Moreover, $\varphi$ is not the zero function (since $\wt{A} (\varphi) > \nu)$, and $\cD( \lambda \varphi) = \lambda^2 \cD(\varphi) < \cD (\varphi)$. This yields a contradiction and (\ref{3.17}), (\ref{3.16}) follow.

Also, if one removes the condition $\varphi\ge 0$ in (\ref{3.17}), one notes that for any $\varphi$ in $D^1(\IR^d)$, $\cD(|\varphi |) \le \cD(\varphi)$ and $\wt{A}(|\varphi |) \ge \wt{A}(\varphi)$. So, the infimum obtained by removing the condition $\varphi \ge 0$ is at least $\wt{J}\,^{\!D}_{u,\nu} $ and hence equal to $\wt{J}\,^{\!D}_{u,\nu}$. The claim of Lemma \ref{lem3.3} below (\ref{3.16}) follows.

When $\nu = \wt{\theta}(u)$, $\wt{J}\,^{\!D}_{u,\nu} = 0$ and $\varphi = 0$ is the only minimizer. We now assume $\nu > \wt{\theta}(u)$ and will prove (\ref{3.18}), (\ref{3.19}). For $\wt{\varphi} \ge 0$ in $D^1(\IR^d)$ a minimizer of (\ref{3.16}), one finds using smooth perturbations in $\IR^d \backslash D$ (see Remark 5.10 1) of \cite{Szni19} for similar arguments) that $\wt{\varphi}$ is a non-negative harmonic function in $\IR^d \backslash D$ that vanishes at infinity and that $|z|^{d-2} \,\wt{\varphi}(z)$ is bounded at infinity. By (\ref{3.10}), (\ref{3.11}) of Lemma \ref{lem3.2}, $\wt{\varphi}$ satisfies an Euler-Lagrange equation (see Remark 5.10 4) of \cite{Szni19} for a similar argument) and for a suitable Lagrange multiplier $\wt{\lambda}$, one has (\ref{3.19}) (and necessarily $\wt{\lambda} > 0$). Since $\wt{\eta} \,'$ is bounded by (\ref{1.4}), it follows from (\ref{3.19}) that $\wt{\varphi}$ is $C^{1,\alpha}$ for all $\alpha \in (0,1)$, see for instance (4.8), p.~71 of \cite{GilbTrud83}. This proves (\ref{3.18}), (\ref{3.19}).

There remains to prove (\ref{3.20}). We have (recall that $\theta_0(u) = \wt{\theta}(u)$)
\begin{equation}\label{3.21}
\nu - \theta_0(u) = \dis\int_D \hspace{-2.5ex}{-}\; \wt{\eta} (\sqrt{u} + \wt{\varphi}) - \wt{\eta} \,(\sqrt{u}) \,dz .
\end{equation}
By (\ref{3.4}), we see that
\begin{equation}\label{3.22}
\begin{array}{lll}
\nu - \theta_0(u)& \le & \; \| \wt{\eta} \,'\|_\infty
\dis\int_D \hspace{-2.5ex}{-}\;
\wt{\varphi} \,dz \stackrel{(\ref{3.19})}{=} \wt{\lambda} \,\|\wt{\eta} \,'\|_\infty 
\dis\int_D \hspace{-2.5ex}{-}\;
G(\wt{\eta}\,' \big(\sqrt{u} + \wt{\varphi}) \,1_D\big) \,dz
\\
&\le &\; \wt{\lambda} \,\|\wt{\eta} \,'\|_\infty^2
\dis\int_D \hspace{-2.5ex}{-}\;  G(1_D) \,dz = c(u_0,D) \,\wt{\lambda}.
\end{array}
\end{equation}
On the other hand, by (\ref{3.5}), we see that
\begin{equation}\label{3.23}
\begin{array}{lll}
\nu - \theta_0(u) &\ge & \;\inf\limits_{[\sqrt{u},\infty)}  \wt{\eta} \,'  \dis\int_D \hspace{-2.5ex}{-}\;  \wt{\varphi} \,dz \stackrel{(\ref{3.19})}{=} \wt{\lambda} \,\inf\limits_{[\sqrt{u},\infty)} \wt{\eta} \,' \dis\int_D \hspace{-2.5ex}{-}\;  G\big(\wt{\eta} \,'  (\sqrt{u} + \varphi^2) \, 1_D\big)  \,dz  
\\
&\ge &\; \wt{\lambda} \,\big(\inf\limits_{[\sqrt{u},\infty)} \wt{\eta} \,'\big)^2\dis\int_D \hspace{-2.5ex}{-}\;  G(1_D) \,dz = c(u,u_0,D) \,\wt{\lambda}.
\end{array}
\end{equation}
The claim (\ref{3.20}) now follows from (\ref{3.22}) and (\ref{3.23}). This concludes the proof of Lemma~\ref{lem3.3}. \hfill $\square$

We now continue the proof of Theorem \ref{theo0.3}. Given $u \in (0,u_0)$ and $\nu \ge \wt{\theta}(u)$ ($= \theta_0(u)$), we see by Lemma \ref{lem3.3} that any minimizer $\wt{\varphi}$ for (\ref{3.16}) satisfies (\ref{3.19}) for a suitable $\wt{\lambda}$ satisfying (\ref{3.20}), so that
\begin{equation}\label{3.24}
\|\wt{\varphi}\|_\infty \le \wt{\lambda} \,\|  \wt{\eta} \,'\|_\infty \; \|G1_D\|_\infty \stackrel{(\ref{3.20}), (\ref{3.4})}{\le} c_2(u,u_0,D) \big(\nu - \theta_0(u)\big).
\end{equation}
In particular, we find that
\begin{equation}\label{3.25}
\begin{array}{l}
\mbox{for $\theta_0(u) \le \nu \le \theta_0(u) + c_1(u,u_0,D) (< 1)$, any minimizer $\wt{\varphi}$}
\\
\mbox{for (\ref{3.16}) satisfies $0 \le\wt{\varphi} \le (\sqrt{u}_0 - \sqrt{u}) \wedge \big\{c_2\big(\nu - \theta_0(u)\big)\big\}$}.
\end{array}
\end{equation}
We will now derive the consequences for the basic variational problem of interest $\ov{J}\,^{\!D}_{u,\nu}$, see (\ref{0.12}), (\ref{0.14}). By (\ref{3.2}), (\ref{3.6}) and the definition of $\ov{\theta}_0$ (see (\ref{0.11})), we find that $\wt{\theta} \ge \ov{\theta}_0$, so that
\begin{equation}\label{3.26}
\mbox{for all $u \in (0,u_0)$ and $\nu \in [\theta_0(u), 1),\, \ov{J}\,^{\!D}_{u,\nu} \ge \wt{J}\,^{\!D}_{u,\nu}$}.
\end{equation}
Moreover, when $\nu \in [\theta_0(u), \theta_0(u) + c_1]$ (with $c_1$ as in (\ref{3.25})), any minimizer $\wt{\varphi}$ for (\ref{3.16}) is bounded by $\sqrt{u}_0 - \sqrt{u}$, and hence satisfies as well $\int_D \hspace{-2.5ex}{-}\;
\ov{\theta}_0 ((\sqrt{u} + \wt{\varphi})^2) \,dz \ge \nu$ (in fact an equality by (\ref{3.17})). We thus find that
\begin{equation}\label{3.27}
\begin{array}{l}
\mbox{$\ov{J}\,^{\!D}_{u,\nu} = \wt{J}\,^{\!D}_{u,\nu}$ for all $\nu \in [\theta_0(u) + c_1]$, and any minimizer $\wt{\varphi}$ of $\wt{J}\,^{\!D}_{u,\nu}$ in (\ref{3.16})}
\\
\mbox{is a minimizer of $\ov{J}\,^{\!D}_{u,\nu}$ in (\ref{0.14})}.
\end{array}
\end{equation}
Now for $\nu$ as above, consider $\varphi$ a minimizer of (\ref{0.14}). Then, we have $\cD(\varphi) = \ov{J}\,^{\!D}_{u,\nu} = \wt{J}\,^{\!D}_{u,\nu}$, and since $\wt{\theta} \ge \ov{\theta}_0$, we find that
\begin{equation}\label{3.28}
\wt{A}(\varphi) = \dis\int_D \hspace{-2.5ex}{-}\;
 \wt{\theta}\,\big((\sqrt{u} + \varphi)^2\big)\,dz \ge \dis\int_D \hspace{-2.5ex}{-}\;
  \ov{\theta}_0 \big((\sqrt{u} + \varphi)^2\big)\,dz \ge \nu.
\end{equation}

\n
This show that $\varphi$ is a minimizer for (\ref{3.15}), hence for (\ref{3.16}) by (\ref{3.17}). We thus find that when $\nu \in [\theta_0(u), \theta_0(u) + c_1]$, the set of minimizers of (\ref{0.14}) and (\ref{3.16}) coincide and the claim (\ref{0.17}) now follows from Lemma \ref{lem3.3}. This concludes the proof of Theorem \ref{theo0.3}.
\end{proof}

With Theorem \ref{theo0.3} we have singled out a regime of ``small excess'' for $\nu$ such that all minimizers $\varphi$ for $\ov{J}\,^{\!D}_{u,\nu}$ in (\ref{0.14}) stay below the maximal value $\sqrt{u}_* - \sqrt{u}$. In the remark below we make some simple observations about the possible existence of a regime where some minimizers in (\ref{0.14}) reach the threshold $\sqrt{u}_* - \sqrt{u}$.

\begin{remark}\label{rem3.4} \rm 1) If $\theta_0$ is discontinuous at $u_*$ (a not very plausible possibility), then $\theta_0 (u_*) < 1$, and for any $\nu \in (\theta_0(u_*),1)$ any minimizer for (\ref{0.14}) must reach the threshold value $\sqrt{u}_* - \sqrt{u}$ on a set of positive Lebesgue measure due to the constraint in (\ref{0.14}).

2) If $\theta_0$ is continuous and its restriction to $[0,u_*]$ is $C^1$ with uniformly positive derivative (corresponding to a ``mean field'' behavior of the percolation function $\theta_0$), then a decomposition as in Lemma \ref{lem3.1} can be achieved with now $u_0 = u_*$. As mentioned below (\ref{3.7}), the facts established till the end of Theorem \ref{theo0.3} (with the exception of the last inequality of (\ref{0.17})) remain valid in this context. In particular, if for some $u \in (0,u_*)$ and $\nu \in (\theta_0(u), 1)$ there is a minimizer $\wt{\varphi}$ for $\wt{J}\,^{\!D}_{u,\nu}$ in (\ref{3.16}) such that $\|\wt{\varphi}\|_\infty = \sqrt{u}_* - \sqrt{u}$, then $\wt{\varphi}$ is a minimizer for $\ov{J}\,^{\!D}_{u,\nu}$ in (\ref{0.14}) and it reaches the threshold value $\sqrt{u}_* - \sqrt{u}$. In the toy example where $\wt{\eta}$ is affine on $[\sqrt{u} + \infty)$ and $0 < \wt{\eta}\,(\sqrt{u}) < \wt{\eta}\, (\sqrt{u}_*) = 1$, such a $\nu < 1$ and $\wt{\varphi}$ (which satisfies (\ref{3.19})) are for instance easily produced. \hfill $\square$
\end{remark}

The above remark naturally raises the question of finding some plausible assumptions on the behavior of the percolation function $\theta_0$ close to $u_*$ (if the behavior mentioned in Remark \ref{rem3.4} 2) is not pertinent, see for instance Figure 4 of \cite{MariLebo06} for the level-set percolation of the Gaussian free field, when $d=3$) and whether such assumptions give rise to a regime for $u,\nu$, ensuring that minimizers of $\ov{J}\,^{\!D}_{u,\nu}$ in (\ref{0.14}) achieve the maximal value $\sqrt{u}_* - \sqrt{u}$ on a set of positive measure. But there are many other open questions. For instance, what can be said about the number of minimizers for (\ref{0.14})? Is the map $\nu \r \ov{J}\,^{\!D}_{u,\nu}$ in (\ref{2.9}) convex?  An important question is of course whether the asymptotic lower bound (\ref{0.10}) can be complemented by a matching asymptotic upper bound.

\end{document}